\newtheorem{thm}{Theorem}[section]
\newtheorem{cor}[thm]{Corollary}
\newtheorem{prop}[thm]{Proposition}
\newtheorem{defn}[thm]{Definition}
\newtheorem{rem}[thm]{Remark}
\DeclareMathOperator{\Ann}{Ann}
\DeclareMathOperator{\cent}{Center}
\DeclareMathOperator{\gr}{gr}
\begin{document}
\title[Classification of solvable Leibniz algebras] {Classification of solvable Leibniz algebras with naturally graded filiform nilradical}
\author{J. M. Casas, M. Ladra, B. A. Omirov and I. A. Karimjanov}
\address{[J.M. Casas] Department of Applied Mathematics I, University of Vigo, E. E. Forestal, 36005 Pontevedra, Spain}
 \email{jmcasas@uvigo.es}
\address{[M. Ladra] Department of Algebra, University of Santiago de Compostela, 15782 Santiago de Compostela, Spain}
 \email{manuel.ladra@usc.es}
\address{[B.A. Omirov --- I. A. Karimdjanov] Institute of Mathematics and Information Technologies  of Academy of Uzbekistan, 29, Do'rmon yo'li street., 100125, Tashkent (Uzbekistan)} \email{omirovb@mail.ru --- iqboli@gmail.com}

\subjclass[2010]{17A32, 17A36, 17A65, 17B30}

\keywords{Lie algebra, Leibniz algebra, natural graduation, filiform algebra, solvability, nilpotency, nilradical, derivation, nil-independence}

\begin{abstract} In this paper we show that the method for  describing solvable Lie algebras with given nilradical by
 means of non-nilpotent outer derivations of the  nilradical is also applicable to the case of Leibniz algebras. Using
 this method we extend the classification of solvable Lie algebras with naturally graded filiform Lie algebra to the
 case of Leibniz algebras. Namely, the classification of solvable Leibniz algebras whose nilradical is a  naturally
 graded filiform Leibniz algebra is obtained.
\end{abstract}

\maketitle
% ----------------------------------------------------------------
\section{Introduction}

Leibniz algebras were introduced at the beginning of the 90s of the past century by J.-L. Loday
in \cite{Lod}. They are a ``non-commutative'' generalization of Lie algebras. Leibniz algebras inherit an important property of Lie algebras which is that the right multiplication operator on an element of a Leibniz algebra is a derivation. Active investigations on Leibniz algebras  theory  show that many results of the theory of Lie algebras can be  extended to Leibniz algebras. Of course, distinctive properties of non-Lie Leibniz algebras have also  been studied \cite{AyOm2,Bar}.

In fact, for a Leibniz algebra we have the corresponding Lie algebra, which is the quotient algebra by the two-sided ideal $I$ generated by the square  elements of a Leibniz algebra. Notice that this ideal is the minimal one such that the quotient algebra is a Lie algebra and in the  case of non-Lie Leibniz algebras it is always non trivial (moreover, it is abelian).

From the theory of Lie algebras it is well known that the study of finite dimensional Lie algebras was reduced to the nilpotent ones \cite{Jac,Mal}. In Leibniz algebras case we have an analogue of Levi's theorem \cite{Bar}. Namely, the decomposition of a Leibniz algebra into a semidirect sum of its solvable radical and a semisimple Lie algebra is obtained. The semisimple part can be described from simple Lie ideals and therefore, the main problem is to study the solvable radical, i.e. in a similar way as in the case of Lie algebras, the description of Leibniz algebras is reduced to  the description of the solvable ones. The analysis of works devoted to the study of solvable Lie algebras (for example \cite{AnCaGa1,AnCaGa2,NdWi,TrWi,WaLiDe}, where solvable Lie algebras with various types of nilradical were studied, such as naturally graded filiform and quasi-filiform algebras, abelian, triangular, etc.) shows that we can also apply similar methods to solvable Leibniz algebras with a given nilradical. Some results of Lie algebras theory generalized to Leibniz algebras \cite{AyOm1} allow to apply the technique of description of solvable extensions of nilpotent Lie algebras to the case of Leibniz algebras.

The aim of the present paper is to classify solvable Leibniz algebras with naturally graded filiform nilradical. Thanks to the works \cite{AyOm2} and \cite{Ver}, we already have  the classification of naturally graded filiform Leibniz algebras.

In order to achieve our goal we organize the paper as follows: in Section \ref{S:Prel} we give some necessary notions and preliminary results about Leibniz algebras and solvable Lie algebras with naturally graded filiform radical.  Section \ref{S:nil_lie} is devoted to the classification of solvable Leibniz algebras whose nilradical is a naturally graded filiform Lie algebra and in  Section \ref{S:nil_leib} we describe, up to isomorphisms, solvable Leibniz algebras whose nilradical is a naturally graded filiform non-Lie Leibniz algebra.

Throughout the paper vector spaces and algebras are  finite-dimensional over the field of the complex numbers. Moreover, in the table of multiplication  of an algebra the omitted products are assumed to be zero and, if it is not noticed, we shall consider non-nilpotent solvable algebras.

\section{Preliminaries} \label{S:Prel}

In this section we give necessary definitions and preliminary results.

\begin{defn} An algebra $(L,[-,-])$ over a field $F$ is called a Leibniz algebra if for any $x,y,z\in L$ the so-called Leibniz identity
\[ \big[x,[y,z]\big]=\big[[x,y],z\big] - \big[[x,z],y\big] \]  holds.
\end{defn}

From the Leibniz identity we conclude that the elements $[x,x], [x,y]+[y,x]$, for any $x, y \in L$, lie in  $\Ann_r(L) =\{x \in L \mid [y,x] = 0,\ \text{for \ all}\ y \in L \}$, the \emph{right annihilator}  of the Leibniz algebra $L$. Moreover, we also get that $\Ann_r(L)$ is a two-sided ideal of $L$.

The two-sided ideal  $\cent(L)=\{x\in L  \mid   [x,y]=0=[y,x], \ \text{for \ all}\ y \in L \}$ is said to be the \emph{center} of $L$.

\begin{defn} A linear map $d \colon L \rightarrow L$ of a Leibniz algebra $(L,[-,-])$ is said to be a derivation if for all $x, y \in L$, the following  condition holds: \[d([x,y])=[d(x),y] + [x, d(y)] \,.\]
\end{defn}

For a given element $x$ of a Leibniz algebra $L$, the right multiplication operators $\mathcal{R}_x \colon L \to L, \mathcal{R}_x(y)=[y,x],  y \in L$, are derivations. This kind of derivations are said to be \emph{inner derivations}. Any Leibniz algebra $L$ has associated the algebra  of right multiplications $\mathcal{R}(L)=\{ \mathcal{R}_x  \mid   x\in L\}$. $\mathcal{R}(L)$ is endowed with a structure  of Lie algebra by means of the bracket $[\mathcal{R}_x,\mathcal{R}_y]=\mathcal{R}_x \mathcal{R}_y - \mathcal{R}_y \mathcal{R}_x = \mathcal{R}_{[y,x]}$. Moreover,
there is an antisymmetric isomorphism between   $\mathcal{R}(L)$ and the quotient algebra $L/\Ann_r(L)$.

\begin{defn}For a given Leibniz algebra $(L,[-,-])$ the sequences of two-sided ideals defined recursively as follows:
\[L^1=L, \ L^{k+1}=[L^k,L],  \ k \geq 1, \qquad \qquad
L^{[1]}=L, \ L^{[s+1]}=[L^{[s]},L^{[s]}], \ s \geq 1.
\]
are said to be the lower central and the derived series of $L$, respectively.
\end{defn}

\begin{defn} A Leibniz algebra $L$ is said to be
nilpotent (respectively, solvable), if there exists $n\in\mathbb N$ ($m\in\mathbb N$) such that $L^{n}=0$ (respectively, $L^{[m]}=0$).
The minimal number $n$ (respectively, $m$) with such property is said to be the index of
nilpotency (respectively, of solvability) of the algebra $L$.
\end{defn}

Evidently, the index of nilpotency of an $n$-dimensional algebra is not greater than $n+1$.

\begin{defn} An $n$-dimensional Leibniz algebra $L$ is said to be null-filiform if $\dim L^i=n+1-i, \ 1\leq i \leq n+1$.
\end{defn}
Evidently, null-filiform Leibniz algebras have maximal index of nilpotency.

\begin{thm}[\cite{AyOm2}]   An arbitrary $n$-dimensional null-filiform Leibniz algebra is isomorphic to the algebra
\[NF_n: \quad [e_i, e_1]=e_{i+1}, \quad 1 \leq i \leq n-1,\]
where $\{e_1, e_2, \dots, e_n\}$ is a basis of the algebra $NF_n$.
\end{thm}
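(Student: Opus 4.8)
The plan is to exploit the fact that a null-filiform algebra satisfies $\dim L/L^2 = 1$, so that $L$ is generated by a single element, and to produce the asserted basis as the sequence of iterated right products of that generator with itself. Concretely, I would fix $e_1 \in L \setminus L^2$ and set $e_{i+1} := [e_i,e_1]$ for $i \geq 1$ (so $e_2 = [e_1,e_1]$), and then show that $\{e_1,\dots,e_n\}$ is a basis in which the only nonzero products are $[e_i,e_1]=e_{i+1}$.

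Two preliminary facts are needed. The first is the standard inclusion $[L^i,L^j] \subseteq L^{i+j}$, which follows by a short induction on $j$ from the Leibniz identity. The second is the decomposition $L = \mathbb{C}e_1 + L^2$, immediate from $\dim L/L^2 = 1$ and $e_1 \notin L^2$. Combining them, $L^{i+1} = [L^i,L] = [L^i,e_1] + [L^i,L^2] \subseteq \mathcal{R}_{e_1}(L^i) + L^{i+2}$, so $\mathcal{R}_{e_1}$ induces a surjection $L^i/L^{i+1} \to L^{i+1}/L^{i+2}$. Since each $L^i/L^{i+1}$ with $1 \leq i \leq n$ is one-dimensional, an induction on $i$ — starting from $e_1 \notin L^2$ and applying this surjection at each step, which forces the image $[e_i,e_1]+L^{i+2}=e_{i+1}+L^{i+2}$ of the generator of $L^i/L^{i+1}$ to generate $L^{i+1}/L^{i+2}$ — gives $e_i \in L^i \setminus L^{i+1}$ for $1 \leq i \leq n$. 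In particular $\{e_1,\dots,e_n\}$ is linearly independent, hence a basis, and $[e_n,e_1] \in L^{n+1} = 0$.

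It remains to check that all other products vanish. Since $e_i \in L^i$, we have $[e_i,e_j] \in L^{i+j}$, which is already $0$ when $i+j > n$; thus only products with $j \geq 2$ and $i+j \leq n$ must be treated, and I would dispose of these by induction on $j$. For $j = 2$ one uses $e_2 = [e_1,e_1] \in \Ann_r(L)$, whence $[e_i,e_2] = 0$ for every $i$. For the inductive step, writing $e_j = [e_{j-1},e_1]$ and applying the Leibniz identity gives $[e_i,e_j] = \big[[e_i,e_{j-1}],e_1\big] - [e_{i+1},e_{j-1}]$, and both summands vanish by the inductive hypothesis $[e_k,e_{j-1}]=0$. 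This yields exactly the multiplication table of $NF_n$, so $L \cong NF_n$.

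The one genuinely delicate point is the non-degeneracy argument in the second paragraph, namely that the iterated self-products $e_i$ stay outside $L^{i+1}$ all the way along the flag; this is where the hypothesis $\dim L^i = n+1-i$ is really used, through the dimension count on the one-dimensional quotients. Once that is in place, the inclusion $[L^i,L^j] \subseteq L^{i+j}$, the right-annihilator observation, and the concluding Leibniz-identity induction are all routine.
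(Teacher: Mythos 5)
Your proof is correct: the surjection $L^i/L^{i+1}\to L^{i+1}/L^{i+2}$ induced by $\mathcal{R}_{e_1}$ (using $L=\mathbb{C}e_1+L^2$ and $[L^i,L^2]\subseteq L^{i+2}$) does force $e_i\in L^i\setminus L^{i+1}$, and the vanishing of $[e_i,e_j]$ for $j\geq 2$ follows as you say from $e_2=[e_1,e_1]\in\Ann_r(L)$ together with the Leibniz-identity induction. The paper states this theorem without proof, citing \cite{AyOm2}, and your argument is essentially the standard one from that reference, so there is nothing to add.
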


Actually, a nilpotent Leibniz algebra is null-filiform if and only if it is one-generated algebra. Notice that this notion has no sense in Lie algebras case, because they are at least two-generated.

\begin{defn} An $n$-dimensional Leibniz algebra $L$ is said to be filiform if
 $\dim L^i=n-i$, for $2\leq i \leq n$.
\end{defn}

Now let us define a naturally graduation for a filiform Leibniz algebra.

\begin{defn} Given a filiform Leibniz algebra $L$, put
$L_i=L^i/L^{i+1}, \ 1 \leq i\leq n-1$, and $\gr(L) = L_1 \oplus
L_2\oplus\dots \oplus L_{n-1}$. Then $[L_i,L_j]\subseteq L_{i+j}$ and we
obtain the graded algebra $\gr(L)$. If $\gr(L)$ and $L$ are isomorphic, then
we say that an algebra $L$ is naturally graded.
\end{defn}

Thanks to \cite{Ver} it is well known that there are two types of naturally graded filiform Lie algebras.
In fact, the second type will appear only in the case when the dimension of the algebra is even.

\begin{thm}[\cite{Ver}] \label{thm2.8} Any complex naturally graded filiform Lie algebra is isomorphic to one of the following non isomorphic algebras:
\[n_{n,1}: [e_i,e_1]=-[e_1,e_i]=e_{i+1}, \quad 2\leq i \leq n-1.\]

\[Q_{2n}:\left\{\begin{aligned}
{}[e_i,e_1] & =  -[e_1,e_i]=e_{i+1},&& 2\leq i \leq 2n-2,\\
[e_i,e_{2n+1-i}]  & =  -[e_{2n+1-i},e_i]=(-1)^i\,e_{2n},&& 2\leq i \leq n.
\end{aligned}\right.\]
\end{thm}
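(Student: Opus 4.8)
Since ``naturally graded'' means $L\cong\gr(L)$, the plan is to assume $L=L_1\oplus\dots\oplus L_{n-1}$ with $[L_i,L_j]\subseteq L_{i+j}$ and $L_{k+1}=[L_1,L_k]$ for each $k$, and to pin down the structure constants using the grading and the Jacobi identity. First I would read off the dimensions of the homogeneous pieces: the filiform condition $\dim L^i=n-i$ forces $\dim L_1=2$ and $\dim L_i=1$ for $2\le i\le n-1$. Since $[L_1,L_k]=L_{k+1}\ne0$ for $1\le k\le n-2$, the subset $\{x\in L_1:[x,L_k]=0\}$ is a proper subspace of $L_1$; choosing $e_1\in L_1$ outside the finitely many such subspaces, the map $x\mapsto[e_1,x]$ is nonzero on every $L_k$ and hence carries $L_k$ onto $L_{k+1}$ for $1\le k\le n-2$. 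Fixing such an $e_1$, picking $e_2\in L_1$ with $[e_1,e_2]\ne0$ (suitably rescaled) and setting $e_{i+1}:=[e_1,e_i]$ for $2\le i\le n-1$, I obtain a basis $\{e_1,\dots,e_n\}$ in which $[e_1,e_i]=e_{i+1}$ ($2\le i\le n-1$), $[e_1,e_n]=0$; by homogeneity every remaining bracket has the form $[e_i,e_j]=c_{ij}\,e_{i+j-1}$ with $i,j\ge2$ (putting $e_k:=0$ for $k>n$), and $c_{ij}=-c_{ji}$, $c_{ii}=0$.

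Next I would extract the relations among the $c_{ij}$. The Jacobi identity on $(e_1,e_i,e_j)$ with $i,j\ge2$ yields the Pascal-type recurrence $c_{ij}=c_{i+1,j}+c_{i,j+1}$ for $i+j\le n$; hence every $c_{ij}$ is an integral combination of the scalars $a_j:=c_{2,j}$ (with $a_j=0$ for $j\ge n$), and the recurrence together with antisymmetry already forces linear identities among them, the first being $a_4=a_3$. The heart of the proof is then the use of the \emph{quadratic} Jacobi identities on triples $(e_i,e_j,e_k)$ with $i,j,k\ge2$: rewriting the structure constants through the recurrence, the triple $(e_2,e_3,e_4)$ collapses to a relation equivalent to $(a_3-a_5)^2=0$, and iterating such identities forces $a_3=a_4=\dots=a_{n-2}$; the remaining scalar $a_{n-1}$ is constrained only by the ``diagonal'' relations $c_{kk}=0$, which for $n$ odd additionally force $a_{n-1}=a_3$ but for $n$ even leave it free. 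Keeping track of which structure constants vanish for degree reasons---and hence which instances of Jacobi actually carry information---is the delicate point, and I expect it to be the main obstacle; it is exactly this bookkeeping that produces the even/odd dichotomy.

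Finally I would normalise. Replacing $e_2$ by $e_2-a_3e_1$ preserves the chain $[e_1,e_i]=e_{i+1}$ and makes all $c_{2,j}$ with $j\le n-2$ vanish; by the recurrence this kills every $c_{ij}$ except those on the top anti-diagonal $i+j=n+1$, which are governed by the single scalar $b:=c_{2,n-1}$. If $n$ is odd, $b=0$, and after the harmless change $e_1\mapsto-e_1$ one reads off $L\cong n_{n,1}$. If $n$ is even, a diagonal rescaling of the basis (again preserving the chain) brings $b$ to a standard value, and recomputing the top anti-diagonal from the recurrence and antisymmetry produces exactly the alternating coefficients $(-1)^i$; thus $L\cong n_{n,1}$ when $b=0$ and $L\cong Q_{2n}$ when $b\ne0$. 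It remains to note that for $2n\ge6$ these two are non-isomorphic: the derived ideal $[L,L]$ is abelian in $n_{n,1}$, while in $Q_{2n}$ one has $[e_3,e_{2n-2}]=-e_{2n}\ne0$ with $e_3,e_{2n-2}\in[L,L]$, so ``$[L,L]$ abelian'' separates them.
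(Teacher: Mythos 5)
First, a remark on the comparison you were asked to make: the paper does not prove this statement at all --- Theorem \ref{thm2.8} is quoted from Vergne \cite{Ver} --- so your attempt can only be measured against the classical structure-constant argument, and your skeleton is indeed that argument. The individual checkable steps are correct: $\dim L_1=2$, $\dim L_k=1$ for $k\ge2$; the existence of $e_1$ (a finite union of proper subspaces cannot cover $L_1$ over $\mathbb{C}$, and $[L_1,L_k]=L_{k+1}$ does hold in $\gr(L)\cong L$); homogeneity giving $[e_i,e_j]=c_{ij}e_{i+j-1}$; the recurrence $c_{ij}=c_{i+1,j}+c_{i,j+1}$ for $i+j\le n$; $a_4=a_3$; the shift $e_2\mapsto e_2-a_3e_1$; the alternation $c_{i,n+1-i}=(-1)^i b$ coming from the recurrence at $i+j=n$ once the lower constants vanish; $b=0$ forced for $n$ odd; the rescaling for $n$ even; and the ``$[L,L]$ abelian'' separation of $n_{n,1}$ from $Q_{2n}$ for $2n\ge6$. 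I also confirm your sample computation: the triple $(e_2,e_3,e_4)$ gives $2a_4a_6-a_4a_5-a_5a_6=0$, which after substituting the linear relations $a_4=a_3$ and $a_6=2a_5-a_3$ (from $c_{44}=0$, valid for $n\ge7$) becomes $2(a_3-a_5)^2=0$, exactly as you claim.

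The genuine gap is the clause ``iterating such identities forces $a_3=a_4=\dots=a_{n-2}$''. That is the entire content of the theorem, and it is asserted rather than proved; moreover the naive iteration does not produce squares. If you induct on $m$, assuming $a_j=a_3$ for $j<m$ and writing $b_j=a_j-a_3$, the natural next triple $(e_2,e_3,e_{m-1})$ yields only $b_mb_{m+1}=0$ --- a product of two unknowns, from which nothing follows. To get definite (square) relations you must first organise the linear antisymmetry/diagonal relations, which express every even-indexed deviation through the odd ones ($b_4=0$, $b_6=2b_5$, $b_8=3b_7-5b_5$, and so on), and only then do suitably chosen triples collapse: I verified $(2,3,4)\Rightarrow 2(a_3-a_5)^2=0$ and, for $n=10$, $(2,3,6)\Rightarrow 3b_7^2=0$ after $b_5=0$ is known, but a general statement and induction (which triples are non-vacuous under the degree condition $i+j+k-2\le n$, which linear relations are available at each stage, and why the outcome is always a nonzero multiple of a square) is exactly the bookkeeping you defer, and it is where the proof actually lives. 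The low-dimensional cases $n\le6$ are settled by the linear relations alone, and everything you write after this point (killing all $c_{ij}$ off the top anti-diagonal, the even/odd dichotomy, the normalisation of $b$, non-isomorphy) is sound; so the proposal is a correct plan with its central inductive step missing rather than a wrong approach.
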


In the following theorem we recall the classification of the naturally graded filiform
non-Lie Leibniz algebras given in \cite{AyOm2}.

\begin{thm}[\cite{AyOm2}] \label{thm28} Any complex $n$-dimensional naturally graded filiform
non-Lie Leibniz algebra is isomorphic to one of the following non
isomorphic algebras:
\[F_n^1=\left\{\begin{array}{llll}
[e_1,e_1] & = &  e_{3},&  \\[1mm]
[e_i,e_1] & =  & e_{i+1}, & \  2\leq i \leq {n-1},
\end{array} \right.\quad \quad F_n^2=\left\{\begin{array}{llll}
[e_1,e_1]& = & e_{3}, &  \\[1mm]
[e_i,e_1]& = & e_{i+1}, & \  3\leq i \leq {n-1}.
\end{array} \right.\]
\end{thm}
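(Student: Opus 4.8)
The plan is to use the gradation to compress the structure of $L$ into a $2$-dimensional bilinear datum and then propagate it along the filiform chain by means of the Leibniz identity. Since $L$ is naturally graded, identify $L$ with $\gr(L)=L_1\oplus\dots\oplus L_{n-1}$, so that $[L_i,L_j]\subseteq L_{i+j}$. From $\dim L^k=n-k$ for $2\le k\le n$ one gets $L^k=L_k\oplus\dots\oplus L_{n-1}$, hence $\dim L_1=2$ and $\dim L_i=1$ for $2\le i\le n-1$; moreover, comparing degree-$(i+1)$ parts in $L^{i+1}=[L^i,L]$ — the only contribution of degree $i+1$ from $[L^i,L]$ being $[L_i,L_1]$ — yields $[L_i,L_1]=L_{i+1}$ for all $i\ge 1$. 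Fixing a basis $\{e_1,e_2\}$ of $L_1$, the whole multiplication is then encoded by the ``seed'' bilinear form $B\colon L_1\times L_1\to L_2\cong\mathbb C$, $B(x,y)=[x,y]$, together with the one-dimensional brackets involving the higher graded components; the crux of the argument is to show that these higher brackets are determined by $B$ through the Leibniz identity.

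The first step is to normalize $B$. If the symmetric part of $B$ vanishes, i.e. $[x,x]=0$ for all $x\in L_1$, then a degree induction (writing an element of $L_i$ as $[L_{i-1},L_1]$ and applying the Leibniz identity) shows that the whole bracket is antisymmetric, so $L$ is a Lie algebra and hence, by Theorem \ref{thm2.8}, isomorphic to $n_{n,1}$ or $Q_{2n}$ — which is excluded, since $L$ is non-Lie. So the symmetric part of $B$ is nonzero: choose $e_1\in L_1$ with $[e_1,e_1]\neq 0$ and rescale $L_2$ so that $[e_1,e_1]=e_3$. Since $[x,y]+[y,x]\in\Ann_r(L)$, this gives $L_2\subseteq\Ann_r(L)$, which a short induction along the chain promotes to $L^2\subseteq\Ann_r(L)$. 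Applying the Leibniz identity to the triple $(e_2,e_2,e_1)$ then forces $\det B=0$, so $B$ has rank $1$; it is either symmetric, $B=\varphi\otimes\varphi$, or not, $B=\varphi\otimes\psi$ with $\varphi,\psi$ independent. A change of basis inside $L_1$ (carried out before fixing $e_3,\dots,e_n$) brings $B$ to exactly one of two normal forms: $[e_1,e_1]=e_3$ with all other seeds zero in the symmetric case, and $[e_1,e_1]=[e_2,e_1]=e_3$ with all other seeds zero in the non-symmetric case.

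The second step closes the chain. In either case, set $e_{i+1}:=[e_i,e_1]$ for $i\ge 3$; this is legitimate because $[L_i,L_1]=L_{i+1}$ together with $[e_i,e_2]=0$ — an induction on $i$ from the Leibniz identity — forces $[e_i,e_1]\neq 0$. Feeding the normalized seeds back in, the Leibniz identity forces $[e_1,e_i]=[e_2,e_i]=0$ for $i\ge 3$ (again by induction on $i$), while $[e_i,e_j]=0$ for $i,j\ge 3$ because $e_j\in L^2\subseteq\Ann_r(L)$. What remains is precisely the table $F_n^2$ in the symmetric case and the table $F_n^1$ in the non-symmetric case. Finally $F_n^1\not\cong F_n^2$: for instance $\dim\cent(F_n^1)=1$ whereas $\dim\cent(F_n^2)=2$.

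I expect the main obstacle to be the rigidity bookkeeping in the second step — verifying that, once $B$ is normalized, no free parameter survives in the higher products, which comes down to the chain of degree inductions ($[e_i,e_2]=0$, then $[e_i,e_1]=e_{i+1}$, then $[e_1,e_i]=[e_2,e_i]=0$), each advancing one degree at a time through the Leibniz identity. The other delicate point is the implication ``$B$ antisymmetric $\Rightarrow$ $L$ is a Lie algebra'', which is exactly what allows the non-Lie hypothesis to remove the $n_{n,1}$/$Q_{2n}$ branch; its proof is again a degree induction resting on $[L_{i-1},L_1]=L_i$. Beyond these, the argument is routine linear algebra over $\mathbb{C}$.
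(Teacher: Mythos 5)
The paper does not actually prove this theorem: it is imported from \cite{AyOm2}, so there is no in-text argument to measure yours against, and I have judged your plan on its own merits. It checks out, and its backbone is a clean one: once some $e_1\in L_1$ has $[e_1,e_1]\neq 0$ spanning $L_2$, you get $L_2\subseteq\Ann_r(L)$, and since $\Ann_r(L)$ is a two-sided ideal and $L_{i+1}=[L_i,L_1]$, also $L^2\subseteq\Ann_r(L)$; this annihilates every product with a factor of $L^2$ on the right and reduces the whole classification to the seed form $B$ and the one-dimensional maps $[L_i,L_1]$, after which the rank-one dichotomy (symmetric versus non-symmetric) yields exactly $F_n^2$ and $F_n^1$, separated by the centre dimensions ($2$ versus $1$), which I verified. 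Two spots are stated more briefly than they can be proved, though both close. First, the single triple $(e_2,e_2,e_1)$ only gives $B(e_2,e_2)[e_3,e_1]=B(e_2,e_1)[e_3,e_2]$, which by itself does not force $\det B=0$; you need the full identity $B(x,y)[e_3,z]=B(x,z)[e_3,y]$ for $x,y,z\in L_1$ (in particular the triple $(e_1,e_2,e_1)$), which makes the two columns of $B$ proportional and, via $B(e_1,e_1)\neq 0$, also gives $[e_3,e_1]\neq 0$. Second, in the excluded branch ``$B$ antisymmetric $\Rightarrow L$ Lie'' the induction cannot run only over elements written as $[L_{i-1},L_1]$: you must also control the components $[L_i,L_j]$ with $i,j\ge 2$, which need not vanish (think of $Q_{2n}$) but must have zero symmetric part; this follows by pairing the Leibniz identity on $(f_i,f_j,e_k)$ with the one on $(e_k,f_i,f_j)$ for $k=1,2$, using at every step that $[L_i,L_1]=L_{i+1}$ forces $([f_i,e_1],[f_i,e_2])\neq(0,0)$, after the antisymmetry of $[L_1,L_i]$ against $[L_i,L_1]$ has been established by the simpler triples $(e_k,f_i,e_l)$. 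With these two computations written out your argument is complete and self-contained, which is more than the paper offers for this statement.
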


\begin{defn} The maximal nilpotent ideal of a Leibniz algebra is said to be the nilradical of the  algebra.
\end{defn}

Notice that the nilradical is not the radical in the sense of Kurosh, because the quotient Leibniz algebra by its nilradical may contain a nilpotent ideal (see \cite{Jac}).

All solvable Lie algebras whose nilradical is the naturally graded filiform Lie algebra $n_{n,1}$ are classified in \cite{SnWi}. Further  solvable Lie algebras whose nilradical is the naturally graded filiform Lie algebra $Q_{2n}$ are classified in \cite{AnCaGa1x}.

Using the above classifications, we shall give the classification of
solvable non-Lie Leibniz algebras whose nilradical is a naturally graded filiform Lie algebra.

It is proved that the dimension of a solvable Lie algebra whose nilradical is isomorphic
to an $n$-dimensional naturally graded filiform Lie algebra is not greater than $n+2$. Below, we present their classification.

In order to agree with the tables of multiplications of algebras in Theorems \ref{thm2.8} and \ref{thm28}, we make the following change of basis in the classification of \cite{SnWi}:
\[e'_i = e_{n+1-i}, \quad 1 \leq i \leq n, \qquad \qquad x=-f.\]
We also use different notation to denote the algebras that appear in \cite{SnWi}. That way the results would be:

\begin{thm}[\cite{SnWi}] \label{thm2.11} There are three types of solvable Lie algebras of dimension $n+1$ with
nilradical isomorphic to $n_{n,1}$, for any $n \geq 4$. The
isomorphism classes in the basis $\{e_1, \dots,e_n,x \}$ are
represented by the following algebras:
\[S_{n+1} (\alpha, \beta):\left\{\begin{aligned}
{} [e_i,e_1]&=-[e_1,e_i]=e_{i+1},&& 2 \leq i \leq n-1,\\
[e_i,x]&=-[x,e_i]=\big((i-2)\alpha +\beta \big)\, e_i, &&2\leq i\leq n,\\
[e_1,x]&=-[x,e_1]= \alpha e_1\,.&&
\end{aligned}\right.\]
The mutually non-isomorphic algebras of this type are $S_{n+1,1}(\beta) = S_{n+1} (1, \beta)$ and 
$S_{n+1,2} = S_{n+1} (0, 1)$.
\[ S_{n+1,3} : \left\{\begin{aligned}
{} [e_i,e_1]& =-[e_1,e_i]=e_{i+1},&& 2 \leq i \leq n-1,\\
[e_i,x]& =-[x,e_i]=(i-1)\, e_i, && 2\leq i\leq n,\\
[e_1,x]& =-[x,e_1]=e_1 + e_2\,.&&
\end{aligned}\right.\]
\[S_{n+1,4}(a_3, a_4, \dots, a_{n-1}) :
 \left\{\begin{aligned}
{}[e_i,e_1]&=-[e_1,e_i]=e_{i+1},&& 2 \leq i \leq n-1,\\
[e_i,x]&=-[x,e_i]=e_i+\sum\limits_{l=i+2}^{n}a_{l+1-i}\, e_l, &&2\leq i\leq n,
\end{aligned}\right.\]
 where the first non-vanishing parameter $\{a_3, \dots, a_{n-1}\}$
can be assumed to be equal to 1.\end{thm}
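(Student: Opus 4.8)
The plan is to reconstruct the classification of Theorem~\ref{thm2.11} by the standard method of extending a nilpotent algebra by an outer derivation, adapted to the Leibniz setting. Since the nilradical is $n_{n,1}$ and the codimension of the nilradical in a solvable algebra exceeds the number of nil-independent outer derivations by at most that much, for a solvable algebra $S$ of dimension $n+1$ we adjoin a single element $x$ acting on $n_{n,1}$ as a non-nilpotent derivation $d=\mathcal{R}_x|_{n_{n,1}}$. First I would compute the derivation algebra of $n_{n,1}$: writing $d(e_1)=\sum a_i e_i$ and $d(e_2)=\sum b_i e_i$ and imposing the Leibniz-derivation identity against the defining relations $[e_i,e_1]=e_{i+1}$ forces $d(e_i)$ for $i\geq 3$ to be determined by $d(e_1),d(e_2)$, and yields a small family of free parameters (essentially $a_1,b_2$ together with the entries of the strictly-upper-triangular ``shift'' part). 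Then I would pick $d$ to be non-nilpotent, i.e. with $(a_1,b_2)\neq(0,0)$, and use the automorphisms of $n_{n,1}$ (which I would compute, or quote, in parallel) to bring $d$ into a canonical Jordan-like normal form.

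The second step is the case analysis on the eigenvalue data. Set $\alpha=a_1$ (the eigenvalue on $e_1$) and $\beta$ the eigenvalue on $e_2$; then the relations $[e_i,x]=\mathcal{R}_x(e_i)$ with $e_{i+1}=[e_i,e_1]$ give, via the derivation property, $\mathcal{R}_x(e_{i+1})=[\mathcal{R}_x(e_i),e_1]+[e_i,\mathcal{R}_x(e_1)]$, so the diagonal coefficient on $e_i$ is $(i-2)\alpha+\beta$ as in $S_{n+1}(\alpha,\beta)$, plus possible off-diagonal tails coming from $d(e_1),d(e_2)$. Scaling $x$ handles the case $\alpha\neq 0$, normalizing $\alpha=1$ and giving $S_{n+1,1}(\beta)$; when the $e_2$-component of $d(e_1)$ is nonzero one instead lands on $S_{n+1,3}$. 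The case $\alpha=0$ splits according to whether $\beta\neq 0$ (scale to get $S_{n+1,2}$) or $\beta=0$, where $d$ acts with a nontrivial unipotent-type tail and, after using automorphisms to clear lower tail entries and to rescale the first surviving parameter to $1$, produces the family $S_{n+1,4}(a_3,\dots,a_{n-1})$. Throughout I would also check the closure conditions: $x$ must not lie in the nilradical (ruling out $d$ nilpotent), $[x,x]$ must lie in the right annihilator — in the Lie case this is $[x,x]=0$ — and the bracket must satisfy the Leibniz identity on all triples involving $x$, which I expect to be automatic once $d$ is a derivation and $d(x)$ is chosen appropriately (one should also verify $[e_i,x]=-[x,e_i]$ is consistent, i.e.\ that the part of the algebra generated by $n_{n,1}$ and $x$ is still Lie, which is what Theorem~\ref{thm2.11} asserts).

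The final step is to separate the resulting algebras into isomorphism classes and verify non-isomorphism across the families. For this I would track invariants that are preserved by isomorphism: the characteristic polynomial of $\mathcal{R}_x$ acting on $S/\Ann_r(S)$ (equivalently the multiset of eigenvalues $\{\alpha,\beta,\beta+\alpha,\dots,\beta+(n-2)\alpha\}$), whether $d$ is semisimple or has a nontrivial nilpotent part, and the dimension of the nilradical; these already separate $S_{n+1,1}(\beta)$ for distinct $\beta$ (up to the obvious equivalence, if any) from $S_{n+1,2}$, $S_{n+1,3}$, and $S_{n+1,4}$. The remaining ambiguity — which parameter values inside $S_{n+1,4}(a_3,\dots,a_{n-1})$ give isomorphic algebras — is handled by letting the automorphism group of $n_{n,1}$ act on the parameter vector and checking that the first-nonzero-parameter normalization is a complete invariant.

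I expect the main obstacle to be \emph{not} the eigenvalue bookkeeping but rather two Leibniz-specific points: (i) proving the dimension bound $\dim S\leq n+2$ and, more importantly, that there is at most one nil-independent outer derivation, which requires knowing the outer derivation algebra of $n_{n,1}$ modulo nilpotents — this is where the computation of $\mathrm{Der}(n_{n,1})$ must be done carefully; and (ii) verifying that the right annihilator conditions (forced by $[x,x],[y,x]+[x,y]\in\Ann_r$) do not permit genuinely non-Lie products among the $e_i$ and $x$, so that the classification in the Lie case really does coincide with Theorem~\ref{thm2.11} and the only new algebras arise later from the non-Lie nilradicals $F_n^1,F_n^2$. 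Once these structural facts are in place, the case analysis above is a finite, if lengthy, computation.
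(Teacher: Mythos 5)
First, a point of comparison: the paper does not prove Theorem~\ref{thm2.11} at all --- it is quoted from \cite{SnWi} (after the change of basis $e_i'=e_{n+1-i}$, $x=-f$), so there is no in-paper argument to measure you against. What you propose is a from-scratch reconstruction by the standard ``extend the nilradical by a non-nilpotent outer derivation, normalize by automorphisms'' method, which is indeed the method of \cite{SnWi} and the one the paper itself uses later for the Leibniz nilradicals; as a strategy it is appropriate.

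However, your case analysis contains a concrete error that would derail the reconstruction. Write $\alpha$ for the $e_1$-eigenvalue and $\beta$ for the $e_2$-eigenvalue of $d=\mathcal{R}_x|_{n_{n,1}}$, so that the diagonal coefficient of $d$ on $e_i$ is $(i-2)\alpha+\beta$. If $\alpha=\beta=0$, then $d$ is strictly upper triangular in the adapted basis, hence nilpotent, and $x$ would be absorbed into the nilradical; so the case you designate as producing $S_{n+1,4}$ (``$\alpha=0$, $\beta=0$ with a unipotent-type tail'') is empty. In fact $S_{n+1,4}$ lives in the case $\alpha=0$ with $\beta$ normalized to $1$: there all of $e_2,\dots,e_n$ carry the same weight $1$, so the upper-triangular tail of $d$ cannot be removed by weight arguments (only its first superdiagonal can, by replacing $x$ with $x+c\,e_1$, i.e.\ adding the inner derivation $\mathcal{R}_{e_1}$), and the surviving parameters are exactly $a_3,\dots,a_{n-1}$; the subcase where the tail vanishes is $S_{n+1,2}$. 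Your claim that ``$\alpha=0$, $\beta\neq 0$ scales to $S_{n+1,2}$'' therefore over-normalizes and would lose the entire family $S_{n+1,4}$. Similarly, $S_{n+1,3}$ arises not merely ``when the $e_2$-component of $d(e_1)$ is nonzero'' but only in the resonance $\beta=\alpha$ (i.e.\ $\beta=1$ after scaling $\alpha=1$); otherwise that component is removable and one is back in $S_{n+1,1}(\beta)$. Finally, the structural fact you list as a needed lemma --- that $n_{n,1}$ admits at most one nil-independent outer derivation --- is false: it admits two, which is precisely why the $(n+2)$-dimensional algebra $S_{n+2}$ of Theorem~\ref{thm2.12} exists; fortunately this lemma is also unnecessary for the $(n+1)$-dimensional classification, where the codimension-one hypothesis already hands you a single $x$.
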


\begin{thm}[\cite{SnWi}] \label{thm2.12} There exists only one class of solvable Lie algebras of dimension $n+2$ with
nilradical $n_{n,1}$. It is represented by a basis $\{e_1, e_2,
\dots, e_n, x, y\}$ and the Lie brackets are
\[S_{n+2}:\left\{\begin{aligned}
{}[e_i,e_1]&=-[e_1,e_i]=e_{i+1},&&  2 \leq i \leq n-1,\\
[e_i,x]&=-[x,e_i]=(i-2)\, e_i, && 2\leq i\leq n,\\
[e_1,x]&=-[x,e_1]=e_1,&&\\
 [e_i,y]&=-[y,e_i]=e_i, && 2\leq i\leq n.
\end{aligned}\right.\]
\end{thm}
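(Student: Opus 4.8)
The plan is to realize $L$ as a solvable extension of its nilradical by outer derivations, following the Mubarakzjanov-type technique recalled among the preliminary results. Let $L$ be solvable of dimension $n+2$ with nilradical $N\cong n_{n,1}$. Since $[L,L]$ is nilpotent for any solvable Lie algebra, $[L,L]\subseteq N$, so $L/N$ is abelian of dimension $2$. Choosing a complement $Q=\langle x,y\rangle$ to $N$, the restrictions $\mathcal R_x|_N$ and $\mathcal R_y|_N$ are derivations of $N$, and they must be nil-independent: if $\alpha\,\mathcal R_x|_N+\beta\,\mathcal R_y|_N$ were nilpotent with $(\alpha,\beta)\neq 0$, then $\mathcal R_{\alpha x+\beta y}$ would act nilpotently on the whole of $L$ (because $\mathcal R_{\alpha x+\beta y}(L)\subseteq[L,L]\subseteq N$ and $N$ is nilpotent), and $N+\langle\alpha x+\beta y\rangle$ would be a nilpotent ideal strictly containing $N$, contradicting maximality of the nilradical.

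Next I would compute $\mathrm{Der}(n_{n,1})$. A derivation is determined by its values on the generators $e_1,e_2$, and the diagonal ones form the $2$-dimensional torus $D_{(\alpha,\beta)}$ given by $D_{(\alpha,\beta)}(e_1)=\alpha e_1$ and $D_{(\alpha,\beta)}(e_i)=\big((i-2)\alpha+\beta\big)e_i$ for $2\le i\le n$; one checks that a maximal torus of $\mathrm{Der}(n_{n,1})$ has dimension $2$. Since the number of nil-independent outer derivations here equals this maximal torus dimension, the standard technique lets me choose the complement so that $\mathcal R_x|_N$ and $\mathcal R_y|_N$ span a maximal torus, and after rescaling within $Q$ I may take $\mathcal R_x|_N=D_{(1,0)}$ and $\mathcal R_y|_N=D_{(0,1)}$. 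This already yields the brackets $[e_1,x]=e_1$, $[e_i,x]=(i-2)e_i$, $[e_i,y]=e_i$ for $2\le i\le n$, $[e_1,y]=0$, together with their antisymmetric counterparts.

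It then remains to determine $[x,y]$ (note $[x,x]=[y,y]=0$ by antisymmetry). Since $[L,L]\subseteq N$ one has $[x,y]\in N$, and from $[\mathcal R_x,\mathcal R_y]|_N=\mathcal R_{[y,x]}|_N=[D_{(1,0)},D_{(0,1)}]=0$ (the two restrictions being commuting diagonal operators) it follows that $[y,x]$, hence $[x,y]$, centralizes $N$, so $[x,y]\in\cent(N)=\langle e_n\rangle$, say $[x,y]=\gamma e_n$. Because $e_n\in\cent(N)$ and $\mathcal R_y(e_n)=e_n$, replacing $x$ by $x-\gamma e_n$ leaves every bracket with the $e_i$ unchanged while forcing $[x,y]=0$; a routine verification of the Jacobi identity on the remaining triples (which reduces to $\mathcal R_x|_N$ and $\mathcal R_y|_N$ being commuting derivations of $N$) shows the resulting structure is the Lie algebra $S_{n+2}$. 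Finally $S_{n+2}$ is solvable because $[S_{n+2},S_{n+2}]\subseteq N$, has dimension $n+2$, and its nilradical is exactly $N$ since $\mathcal R_x|_N$ and $\mathcal R_y|_N$ are already nil-independent on $\langle e_1,e_2\rangle$; this settles existence and uniqueness simultaneously.

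I expect the main obstacle to be the reduction in the second paragraph: passing from ``two nil-independent derivations'' to ``a basis of a maximal torus after a suitable change of complement.'' This relies on $\mathrm{Der}(N)$ being algebraic, on inner derivations of the nilpotent algebra $N$ being nilpotent, and on $\mathcal R_x|_N$ and $\mathcal R_y|_N$ commuting modulo inner (hence nilpotent) derivations, together with care about which normalizations of $(x,y)$ survive once the torus is fixed. The explicit description of $\mathrm{Der}(n_{n,1})$ and the verification that its maximal torus is $2$-dimensional is the other technical input, though it is an elementary computation.
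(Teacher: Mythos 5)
Your overall scheme is the right one, and it matches the framework this paper sets up in Section \ref{S:Prel}; note, however, that the paper does not prove Theorem \ref{thm2.12} at all --- it is quoted from \cite{SnWi} after a change of basis --- so your argument has to stand on its own against the computation done there. The parts of your proposal that are solid: the Mubarakzjanov-type setup ($[L,L]\subseteq N$, nil-independence of $\mathcal{R}_{x|_N},\mathcal{R}_{y|_N}$ via maximality of the nilradical), the identification of the diagonal torus $D_{(\alpha,\beta)}$ of $\mathrm{Der}(n_{n,1})$, and the endgame: once $\mathcal{R}_{x|_N}=D_{(1,0)}$ and $\mathcal{R}_{y|_N}=D_{(0,1)}$, the commutator argument correctly forces $[x,y]\in\cent(n_{n,1})=\langle e_n\rangle$, the substitution $x\mapsto x-\gamma e_n$ legitimately kills $[x,y]$ because $e_n$ is central in $N$ and $[e_n,y]=e_n$, and the check that the nilradical of $S_{n+2}$ is exactly $n_{n,1}$ is fine.

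The genuine gap is the step you yourself flag: ``the standard technique lets me choose the complement so that $\mathcal{R}_{x|_N}$ and $\mathcal{R}_{y|_N}$ span a maximal torus.'' That reduction is essentially the whole content of the theorem, and it does not follow from the ingredients you list (algebraicity of $\mathrm{Der}(N)$, nilpotency of inner derivations, commutation modulo inner derivations, nil-independence). Those facts at best give you, inside a splittable hull and after conjugating by an automorphism of $N$, decompositions $\mathcal{R}_{x|_N}=t_1+n_1$, $\mathcal{R}_{y|_N}=t_2+n_2$ with $t_1,t_2$ spanning the standard torus and $n_1,n_2$ nilpotent; they do not tell you that $n_1,n_2$ can be removed by adding elements of $N$ to $x,y$ and recombining them. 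That this is not automatic is shown by the codimension-one case in this very paper: in Theorem \ref{thm2.11} the algebras $S_{n+1,3}$ and $S_{n+1,4}$ have $\mathcal{R}_{x|_N}$ with semisimple part in the torus but a nilpotent part that cannot be conjugated or absorbed away. So the passage from two nil-independent, commuting-mod-inner derivations to a pure torus action genuinely requires more structural input on $\mathrm{Der}(n_{n,1})$ than the dimension of its maximal torus --- e.g.\ the explicit form of all derivations (equivalently, the torus weight decomposition of $\mathrm{Der}(n_{n,1})$, the fact that the zero-weight space is the torus itself, and which weight components are inner), used together with both conditions at once; this is exactly the matrix computation carried out in \cite{SnWi}. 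As written, your proposal asserts the conclusion of that computation rather than performing it, so the uniqueness claim (and with it the theorem) is not yet established; alternatively you would need to invoke and verify a general theorem on maximal solvable extensions being $T\ltimes N$, which is itself a nontrivial statement not covered by the facts you cite.
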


Now we recall the classification given in \cite{AnCaGa1x} after the
following change of basis: \[e_1'=-e_1, \qquad x'=-Y_1, \qquad y'=-Y_2 \,.\]

\begin{prop}[\cite{AnCaGa1x}] \label{prop2.13} Any solvable Lie algebra of dimension 2n+1 with nilradical isomorphic to $Q_{2n}$
is isomorphic to one of the following algebras:
\[ Q_{2n+1,1}(\alpha):  \left\{\begin{aligned}
{}[e_i,e_1]& =-[e_1,e_i]=e_{i+1},&&  2\leq i \leq 2n-2,\\
[e_i,e_{2n+1-i}]& =-[e_{2n+1-i},e_i]=(-1)^i\, e_{2n},&& 2\leq i \leq n,\\
 [e_1,x]&=-[x,e_1]=e_1,&&\\
 [e_i,x]&=-[x,e_i]=(i-2+\alpha)\,e_i, &&2\leq i\leq 2n-1,\\
  [e_{2n},x]&=-[x,e_{2n}]=(2n-3-2\alpha)\,e_{2n}\,.&&
\end{aligned}\right.\]

\[Q_{2n+1,2}:  \left\{\begin{aligned}
{}[e_i,e_1]& =-[e_1,e_i]=e_{i+1},&& 2\leq i \leq 2n-2,\\
[e_i,e_{2n+1-i}]&=-[e_{2n+1-i},e_i]=(-1)^i\, e_{2n},&& 2\leq i \leq n,\\
 [e_1,x]& =-[x,e_1]=e_1 + \varepsilon\, e_{2n},&& \varepsilon = 0, 1, \\
  [e_i,x]&=-[x,e_i]=(i-n)\, e_i, && 2\leq i\leq 2n-1,\\
[e_{2n},x]&=-[x, e_{2n}]= e_{2n}\,.&&
\end{aligned}\right.\]

\[Q_{2n+1,3}(\alpha):  \left\{\begin{aligned}
{}[e_i,e_1]& =-[e_1,e_i]=e_{i+1},&& 2\leq i \leq 2n-2,\\
[e_i,e_{2n+1-i}] & =-[e_{2n+1-i},e_i]=(-1)^i\, e_{2n},&& 2\leq i \leq n,\\
 [e_{2+i},x] & =-[x,e_{2+i}]=e_{2+i} + \sum\limits_{k=2}^{\lfloor \frac {2n-3-i} {2} \rfloor}
  \alpha^{2k+1}\, e_{2k+1+i}, && 0\leq i \leq 2n-6,\\
 [e_{2n-i},x]& =-[x,e_{2n-i}]=e_{2n-i}, && i = 1,2,3,\\
  [e_{2n},x]& =-[x,e_{2n}]= 2\, e_{2n}\,.&&
\end{aligned}\right. \]
\end{prop}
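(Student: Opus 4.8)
The plan is to follow the Mubarakzjanov scheme for reconstructing a solvable Lie algebra from its nilradical, specialised to $N=Q_{2n}$. Let $L$ be a solvable Lie algebra with $\dim L=2n+1$ and nilradical $N\cong Q_{2n}$; since $\dim N=2n$, a complementary subspace is one-dimensional and we may write $L=N\oplus\langle x\rangle$ as vector spaces. First I would invoke the standard fact (which, by the generalities recalled in the Introduction and in \cite{AyOm1}, carries over to this setting) that the right multiplication $d:=\mathcal{R}_x$ restricts to a \emph{non-nilpotent} derivation of $N$: otherwise $N\oplus\langle x\rangle$ would be a nilpotent ideal of $L$ properly containing $N$, contradicting maximality of the nilradical. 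Consequently $L$ is encoded by a non-nilpotent derivation $d\in\mathrm{Der}(Q_{2n})$ together with the vector $[x,x]\in N$, subject to the Leibniz/Jacobi identities, and the classification amounts to describing such data up to the natural equivalence ($x\mapsto cx+n_0$, $c\in\mathbb{C}^{\ast}$, $n_0\in N$, combined with $\mathrm{Aut}(Q_{2n})$).

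The second step is to compute $\mathrm{Der}(Q_{2n})$ explicitly in the basis $\{e_1,\dots,e_{2n}\}$ of Theorem~\ref{thm2.8}. Writing $d(e_i)=\sum_j a_{i,j}e_j$ and imposing $d([e_i,e_j])=[d(e_i),e_j]+[e_i,d(e_j)]$ on the defining relations $[e_i,e_1]=e_{i+1}$ and $[e_i,e_{2n+1-i}]=(-1)^i e_{2n}$ produces a linear system whose solution exhibits $\mathrm{Der}(Q_{2n})$ as the sum of the inner derivations and a small space of outer ones; because $Q_{2n}$ is filiform and two-generated, the maximal number of nil-independent derivations is exactly one, consistent with $\dim L=\dim N+1$. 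Using that $\mathrm{Der}(N)$ is algebraic, I would take the abstract Jordan decomposition $d=d_s+d_n$ with $d_s,d_n\in\mathrm{Der}(N)$ commuting, $d_s$ semisimple and $d_n$ nilpotent, and choose a basis of $N$ in which $d_s$ is diagonal, $d_s(e_i)=\lambda_i e_i$.

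Third, I would extract the eigenvalue relations. Applying the Jacobi identity to the triples $\{x,e_i,e_1\}$ and $\{x,e_i,e_{2n+1-i}\}$ (equivalently, using that $d_s$ is a derivation of the fixed bracket) forces $\lambda_{i+1}=\lambda_i+\lambda_1$ for $2\le i\le 2n-2$ and $\lambda_{2n}=\lambda_i+\lambda_{2n+1-i}$ for $2\le i\le n$, so that every $\lambda_i$ is determined by $\lambda_1$ and $\lambda_2$, while non-nilpotency of $d$ forbids $(\lambda_1,\lambda_2)=(0,0)$. A rescaling of $x$ normalises one parameter, and the cases $\lambda_1\neq 0$ (normalised to $\lambda_1=1$) and $\lambda_1=0$ are treated separately. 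In the generic $\lambda_1=1$ situation $d_n$ turns out to be inner and can be absorbed by the substitution $x\mapsto x+n_0$, leaving the diagonal family $Q_{2n+1,1}(\alpha)$ with $\alpha=\lambda_2$; at the resonant value of $\lambda_2$ for which $e_1$ and $e_{2n}$ share the same weight, an outer nilpotent shift $e_1\mapsto e_{2n}$ survives and, after normalising its coefficient to $0$ or $1$, gives $Q_{2n+1,2}$. In the $\lambda_1=0$ situation $d_s$ is supported essentially on $e_{2n}$, there is a large space of outer nilpotent derivations, and normalising the surviving one by $\mathrm{Aut}(Q_{2n})$ produces the family $Q_{2n+1,3}(\alpha)$ together with its characteristic tails $\sum_k\alpha^{2k+1}e_{2k+1+i}$. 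Finally I would check that in each case the nilradical is indeed $Q_{2n}$ (no larger nilpotent ideal can exist since $d$ is non-nilpotent and $L/N$ is one-dimensional) and that the listed algebras are pairwise non-isomorphic, noting that an isomorphism carries the nilradical onto the nilradical, hence induces an element of $\mathrm{Aut}(Q_{2n})$ and sends $x\mapsto cx+n_0$; comparing the induced semisimple operators — in particular the multiset of eigenvalue ratios and the eigenvalue on $e_{2n}$ — separates the three types, the distinct values of $\alpha$, and the two values of $\varepsilon$.

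The main obstacle is the last step: the explicit normalisation of the nilpotent part $d_n$ and of the residual parameters using the rather rigid automorphism group of $Q_{2n}$. The extra bracket $[e_i,e_{2n+1-i}]=(-1)^i e_{2n}$ constrains $\mathrm{Aut}(Q_{2n})$ far more than for the model filiform algebra $n_{n,1}$, so several changes of basis one would like to perform are inadmissible; this rigidity is exactly what splits the answer into the three families and forces the polynomial tails in $Q_{2n+1,3}(\alpha)$.
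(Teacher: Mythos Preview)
The paper does \emph{not} prove this proposition: it is quoted verbatim (after a change of basis) from \cite{AnCaGa1x}, so there is no ``paper's own proof'' to compare against. Your outline follows the Mubarakzjanov scheme, which is exactly the method used in \cite{AnCaGa1x} and throughout the present paper, so in spirit you are reconstructing the original argument.

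That said, there is one concrete error and one genuinely incomplete step.

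The error: you claim that ``the maximal number of nil-independent derivations is exactly one''. This is false for $Q_{2n}$. Proposition~\ref{prop2.14} exhibits a $(2n+2)$-dimensional solvable Lie algebra with nilradical $Q_{2n}$, so $Q_{2n}$ admits \emph{two} nil-independent outer derivations. In the present proposition the complement is one-dimensional simply because $\dim L=2n+1$ is a hypothesis, not because of any upper bound coming from $\mathrm{Der}(Q_{2n})$. This does not break your argument, but the sentence as written is incorrect and should be removed.

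The incomplete step is the one you flag yourself, and it is more serious than you suggest. Your description of the $\lambda_1=0$ case is inaccurate: the semisimple part $d_s$ is not ``supported essentially on $e_{2n}$''; rather $d_s$ acts as $0$ on $e_1$, as $1$ on $e_2,\dots,e_{2n-1}$, and as $2$ on $e_{2n}$. The nilpotent piece that survives involves the \emph{odd}-indexed off-diagonal entries only (parity coming from the pairing $[e_i,e_{2n+1-i}]=(-1)^i e_{2n}$), and the automorphism group of $Q_{2n}$ does \emph{not} let you rescale these entries independently --- that is why a single parameter $\alpha$ remains and why the tails appear as powers $\alpha^{2k+1}$ rather than as an arbitrary tuple of parameters as in the analogous family $S_{n+1,4}$ for $n_{n,1}$. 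Your sketch does not explain why the residual parameters collapse to a single one, and this is precisely the computation that distinguishes $Q_{2n}$ from $n_{n,1}$. Without it, the family $Q_{2n+1,3}(\alpha)$ is asserted rather than derived.
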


\begin{prop}[\cite{AnCaGa1x}] \label{prop2.14} For any $n \geq 3$ there is only one $(2n+2)$-dimensional solvable Lie algebra
having a nilradical isomorphic to $Q_{2n}$:
\[\left\{\begin{aligned}
{}[e_i,e_1]&=-[e_1,e_i]=e_{i+1},&& 2\leq i \leq 2n-2,\\
[e_i,e_{2n+1-i}]&=-[e_{2n+1-i},e_i]=(-1)^i\, e_{2n},&& 2\leq i \leq n,\\
 [e_i,x]&=-[x,e_i]=i\, e_i, && 1\leq i \leq 2n-1,\\
[e_{2n},x]&=-[x,e_{2n}]=(2n+1)\, e_{2n}, &&\\
 [e_i,y]&=-[y,e_i]=e_i, && 1\leq i \leq 2n-1,\\
 [e_{2n},y]&=-[y,e_{2n}]=2\, e_{2n} \,.  &&
\end{aligned}\right.\]
\end{prop}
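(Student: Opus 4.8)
The plan is to follow the standard scheme for classifying solvable Lie algebras with a prescribed nilradical. Let $L$ be a solvable Lie algebra with $\dim L=2n+2$ and nilradical $N\cong Q_{2n}$, and fix a vector-space complement $Q$ of $N$ in $L$, so that $\dim Q=2$. For $q\in Q$ the right multiplication $\mathcal{R}_q|_N$ is a derivation of $N$, and since $\operatorname{codim}N=2$ is, by the dimension bound recalled above, the largest possible, the derivations $\mathcal{R}_q|_N$ coming from a basis $\{x,y\}$ of $Q$ must be nil-independent. Moreover, because the codimension is maximal, one can choose $\{x,y\}$ (after replacing them by suitable linear combinations and adding elements of $N$ to absorb nilpotent parts) so that $\mathcal{R}_x|_N$ and $\mathcal{R}_y|_N$ are commuting semisimple derivations forming a basis of a maximal torus on $N$. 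Hence the first step is to describe that torus.

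Next I would compute the diagonal derivations of $Q_{2n}$ in the basis $\{e_1,\dots,e_{2n}\}$. Writing $d(e_i)=\lambda_i e_i$, the products $[e_i,e_1]=e_{i+1}$ give $\lambda_{i+1}=\lambda_1+\lambda_i$ for $2\le i\le 2n-2$, and the products $[e_i,e_{2n+1-i}]=(-1)^i e_{2n}$ give $\lambda_{2n}=\lambda_i+\lambda_{2n+1-i}$; solving these relations yields $\lambda_i=\lambda_2+(i-2)\lambda_1$ for $2\le i\le 2n-1$ and $\lambda_{2n}=2\lambda_2+(2n-3)\lambda_1$, and conversely every such assignment is a derivation since the remaining products vanish. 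Thus the space of diagonal derivations is two-dimensional, freely parametrized by $(\lambda_1,\lambda_2)$, so the maximal torus is exactly two-dimensional and $\{\mathcal{R}_x|_N,\mathcal{R}_y|_N\}$ is a basis of it. Using the residual freedom to rescale the basis vectors $e_i$ and to change the basis of $Q$, I would normalize this basis to the two canonical diagonal forms displayed in the statement, which fixes all brackets $[e_i,x]$ and $[e_i,y]$.

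It then remains to determine the brackets among $x$ and $y$; as squares vanish in a Lie algebra, only $[x,y]$ is in question. I would first show that $L/N$ is abelian (otherwise, say $[x,y]\equiv\mu y\pmod N$ with $\mu\ne0$, and $y$ becomes an eigenvector of $\mathcal{R}_x$ lying outside $N$, contradicting that $N$ is the nilradical), so $[x,y]\in N$. A one-line Jacobi computation using that the $e_i$ are eigenvectors of both $\mathcal{R}_x$ and $\mathcal{R}_y$ shows $[x,y]$ is centralized by all of $N$, hence lies in the one-dimensional center $\langle e_{2n}\rangle$ of $Q_{2n}$; since $e_{2n}$ has nonzero weight for the torus, its component in $[x,y]$ can be removed by replacing $y$ with $y-\beta e_{2n}$ (or $x$ with $x-\beta e_{2n}$), which alters no other bracket. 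Hence $[x,y]=0$, the multiplication of $L$ is forced to coincide with the one displayed, and uniqueness up to isomorphism follows because every choice made was either imposed by the identities or realized by a change of basis; a direct check that the displayed table satisfies the Jacobi identity and has nilradical $Q_{2n}$ completes the proof.

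The point I expect to be the main obstacle is the structural claim in the first paragraph: that when the codimension is maximal, a maximal set of nil-independent derivations of $Q_{2n}$ may be taken inside a maximal torus — equivalently, that the nilpotent part of each $\mathcal{R}_q|_N$ can be cleared by modifying $q$ within $q+N$. After that, the main nuisance is keeping track of the residual basis-change freedom in the normalization step so as not to overlook a genuinely inequivalent algebra; the torus computation and the Jacobi verifications themselves are routine.
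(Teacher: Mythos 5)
You should first be aware that the paper does not prove this statement at all: Proposition \ref{prop2.14} is quoted from \cite{AnCaGa1x} (after a change of basis), so your proposal supplies an argument the paper only cites, and the fair comparison is with the method of that source, which is also the method this paper itself uses in its Leibniz computations. Much of your outline is sound: your computation of the diagonal derivations of $Q_{2n}$ is correct ($\lambda_i=\lambda_2+(i-2)\lambda_1$ for $2\le i\le 2n-1$ and $\lambda_{2n}=2\lambda_2+(2n-3)\lambda_1$), the inclusion $[x,y]\in N$ is immediate from solvability, and your argument that $[x,y]$ centralizes $N$, hence lies in the one-dimensional center $\langle e_{2n}\rangle$ and can be removed by shifting $y$ by a multiple of $e_{2n}$, is fine.

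The genuine gap is the step you flag yourself: that $x$ and $y$ can be modified within $x+N$ and $y+N$ (together with basis changes) so that $\mathcal{R}_{x|_N}$ and $\mathcal{R}_{y|_N}$ become commuting semisimple derivations spanning a maximal torus. The mechanism you suggest --- ``adding elements of $N$ to absorb nilpotent parts'' --- cannot work as stated, because $Q_{2n}$ has outer nilpotent derivations whose contribution is not removable this way; this is precisely why the codimension-one classification in Proposition \ref{prop2.13} contains the continuous family $Q_{2n+1,3}(\alpha)$, in which the nilpotent part of the action of the single non-nilpotent element survives every such normalization. In the codimension-two case the nilpotent parts do disappear, but only through the interplay of the two derivations: one must write down the general form of $\operatorname{Der}(Q_{2n})$ (the analogue of Propositions \ref{prop31} and \ref{prop5.5}), use that the commutator of the two restrictions is the inner derivation determined by $[x,y]\in N$, and carry out the explicit reduction, as the cited source does; without that lemma the uniqueness claim is not established. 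A smaller but telling point: your own relations force $\mu_{i+1}=\mu_i+\mu_1$, so no diagonal derivation has $\mu_1=\mu_2=\mu_3=1$; hence the $y$-row as printed (range $1\le i\le 2n-1$) must be a typo for $2\le i\le 2n-1$ with $[e_1,y]=0$, and your plan to ``normalize to the two canonical diagonal forms displayed'' glosses over this inconsistency rather than detecting it, which indicates the normalization step was never actually checked against your own parametrization.
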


Let $R$ be a solvable Leibniz algebra with nilradical $N$. We
denote by $Q$ the complementary vector space  of the nilradical
$N$ to the algebra $R$. Let us consider the restrictions to $N$ of
the right multiplication operator on an element $x \in Q$ (denoted
by $\mathcal{R}_{{x |}_{N}}$). If the operator $\mathcal{R}_{{x |}_{N}}$ is nilpotent,
then  we assert that the subspace $\langle x+N \rangle$ is a
nilpotent ideal of the algebra $R$. Indeed, since for a solvable
Leibniz algebra $R$ we get the inclusion $R^2\subseteq N$
\cite{AyOm1}, and hence the subspace $\langle x+N \rangle $ is an
ideal. The nilpotency of this ideal follows from the Engel's
theorem for Leibniz algebras \cite{AyOm1}. Therefore, we have a
nilpotent ideal which strictly contains the nilradical, which is
in contradiction with the maximality of $N$. Thus, we obtain that
for any $x \in Q$, the operator $\mathcal{R}_{{x |}_{N}}$ is a non-nilpotent
 derivation of $N$.

 Let $\{x_1, \dots, x_m\}$ be a basis of
$Q$, then for any scalars $\{\alpha_1, \dots, \alpha_m\}\in
\mathbb{C}\setminus\{0\}$, the matrix $\alpha_1\mathcal{R}_{{x_1
|}_{N}}+\dots+\alpha_m\mathcal{R}_{{x_m|}_{N}}$ is not nilpotent, which
means that the elements $\{x_1, \dots, x_m\}$ are nil-independent
\cite{Mub}. Therefore, we have that the dimension of $Q$ is bounded by
the maximal number of nil-independent derivations of the
nilradical $N$. Moreover, similarly to the case of Lie algebras,
for a solvable Leibniz algebra $R$ the inequality $\dim N \geq
\frac{\dim R}{2}$ holds.

\section{solvable Leibniz algebras whose nilradical is a Lie algebra} \label{S:nil_lie}

It is not difficult to see that if $R$ is a solvable non-Lie
Leibniz algebra with nilradical isomorphic to the algebras
$n_{n,1}$ or $Q_{2n}$, then the dimension of $R$  is also not
greater than $n+2$ and $2n+2$, respectively.

Let $n_{n,1}$ or $Q_{2n}$ be the nilradical of a solvable Leibniz
algebra $R$. Since the ideal $I=\langle \{[x,x] \mid x \in R \}
\rangle$ is contained in $\Ann_r(R)$, then $I$ is abelian, hence it
is contained in the  nilradical. Taking into account the multiplication
in $n_{n,1}$ (respectively $Q_{2n}$) we conclude that $I=\langle
\{e_n\} \rangle$.

Having in mind that an $(n+1)$-dimensional algebra $R$ is
solvable, then the quotient algebra $R/I$ is also a solvable Lie
algebra with nilradical $n_{n,1}$ (whose lists of tables of
multiplication are given  in Theorems \ref{thm2.11} and
\ref{thm2.12}).

\textbf{Case $n_{n,1}$.} Let us assume that $R$ has dimension
$n+1$, then the table of multiplication in $R$ will be equal to
the table of multiplication of $S_{n+1,i}$, ($i=1, 2, 3, 4$),
except the following products:
 \begin{align*}
 [e_1,x] &= \alpha_1e_1 +\gamma_4e_n,   & [e_2, x] & = \beta_1e_2 +\gamma_5e_n, &\\
 [x, e_1]&= -\alpha_1e_1 +\gamma_1e_n,  & [x, e_2] & = -\beta_1e_2 +\gamma_2e_n, & [x, x] = \gamma_3e_n,
 \end{align*}
where $\{\gamma_1+\gamma_4, \gamma_2+\gamma_5, \gamma_3\} \neq \{0, 0, 0\}$.

Note that taking the change of basis
\[e'_1 = \alpha_1\, e_1+\gamma_4\, e_n, \qquad e'_2=\beta_1\, e_2 +\gamma_5\, e_n\]
we can assume
that $\gamma_4=\gamma_5=0$, i.e., $[e_1,x] = \alpha\, e_1$ and
$[e_2, x] = \beta_1\, e_2$.

It is not difficult to see that, for the omitted products, the
antisymmetric identity  holds, i.e.

\[\left\{\begin{aligned}
{}[e_i,e_1] & = - [e_1, e_i] = e_{i+1},&& 2 \leq i \leq n-1,\\
[e_i,x]&=-[x, e_i], &&3\leq i\leq n\,.
\end{aligned} \right.\]

We have $[e_n, x] =0$ because  $0=[x, e_n]=-[e_n,x]$.

Consider
\[0=[x,e_n]=\big[x,[e_{n-1},e_1]\big]=\big[[x,e_{n-1}],e_1\big]-\big[[x,e_1],e_{n-1}\big]=-(n-2+\beta)\, e_n \,.\]

In the list of Theorem \ref{thm2.11} only the algebra
$S_{n+1,1}(\beta)$ is representative of the class for which
the equality $[e_n, x] =0$  holds. This class is defined by $\beta
= 2-n$.

Therefore, in the case of $\dim R = n+1$ whose nilradical is
$n_{n,1}$, we have the following family:

\[R_{n+1,1} (\gamma_1, \gamma_2, \gamma_3):\left\{\begin{aligned}
{}[e_i,e_1] & = - [e_1, e_i] = e_{i+1},&& 2 \leq i \leq n-1, \\
 [e_1, x] &= e_1,&& \\
 [x,e_1] &= -e_1+\gamma_1\, e_n,&&\\
 [e_2, x]&= (2-n)\, e_2,&&\\
 [x,e_2]&= (n-2)\, e_2 +\gamma_2\, e_n,&& \\
[e_i,x]&=-[x, e_i] = (i-n)\, e_i, &&3\leq i\leq n-1, \\
 [x, x] &= \gamma_3\, e_n, &&
\end{aligned}\right. \]
where $(\gamma_1, \gamma_2, \gamma_3)\neq (0,0, 0)$.

Applying a similar argument and the table of multiplication
of the algebra in Theorem \ref{thm2.12} we conclude that solvable non-Lie Leibniz algebras of dimension $n+2$ with
nilradical $n_{n,1}$ do not exist.

\begin{thm} \label{thm4.1} Any $(n+1)$-dimensional solvable Leibniz algebra with nilradical $n_{n,1}$ is isomorphic to one of the following pairwise non isomorphic algebras:
\[R_{n+1,1}(0, 0, 1), \quad \ R_{n+1,1}(0, 1, 0), \quad \ R_{n+1,1}(1, 1, 0), \quad \ R_{n+1,1}(1, 0, 0).\]
\end{thm}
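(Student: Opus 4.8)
The plan is to start from the family $R_{n+1,1}(\gamma_1,\gamma_2,\gamma_3)$ already derived in the text, with the parameter constraint $(\gamma_1,\gamma_2,\gamma_3)\neq(0,0,0)$, and to classify it up to isomorphism by exhibiting an action of the relevant change-of-basis group on the parameter space. First I would observe that since the nilradical $n_{n,1}$ and the extension generator $x$ are largely rigid (the bracket relations $[e_i,e_1]=e_{i+1}$, the scalar actions $[e_i,x]=(i-n)e_i$, $[e_1,x]=e_1$ are forced), the only freedom in an isomorphism is: (i) rescaling and shifting the basis vectors $e_1,\dots,e_n$ in a way compatible with the grading, in particular $e_1'=Ae_1$, $e_2'=Be_2$, which propagates to $e_i'=AB\cdots$ type relations via $e_{i+1}=[e_i,e_1]$, together with possible additions of multiples of $e_n$ (the one-dimensional ideal $I=\langle e_n\rangle$); and (ii) replacing $x$ by $x'=x+\sum_{i}c_ie_i$, since adding any nilradical element to $x$ preserves the property that $\mathcal R_{x'|_N}$ is non-nilpotent.

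Next I would compute how each such transformation acts on $(\gamma_1,\gamma_2,\gamma_3)$. The substitution $x'=x+\sum c_ie_i$ will modify $[x',e_1]$, $[x',e_2]$ and $[x',x']$ by extra $e_n$-terms coming from brackets like $[e_i,e_1]$ and $[e_i,x]$, so it changes $\gamma_1,\gamma_2,\gamma_3$ by controllable linear/quadratic expressions in the $c_i$; the rescalings $e_1'=Ae_1$, $e_2'=Be_2$ rescale $\gamma_1,\gamma_2$ and $\gamma_3$ by appropriate monomials in $A,B$ (and whatever scalar multiplies $e_n$). The key structural point to check is that $\gamma_3$ (the coefficient in $[x,x]=\gamma_3e_n$) cannot be traded against $\gamma_1$ or $\gamma_2$: adding nilradical elements to $x$ typically shifts $\gamma_1$ and $\gamma_2$ but affects $\gamma_3$ only through the square term $[e_i,e_i]$-type contributions which vanish in this Lie nilradical, so $\gamma_3=0$ versus $\gamma_3\neq0$ is an invariant dichotomy. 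Within $\gamma_3\neq0$ one normalizes $\gamma_3=1$ and kills $\gamma_1,\gamma_2$, giving $R_{n+1,1}(0,0,1)$. Within $\gamma_3=0$ one is left with $(\gamma_1,\gamma_2)\neq(0,0)$, and the rescalings act on this pair so that the orbits are represented by $(0,1)$, $(1,0)$, $(1,1)$ — here one must check that $\gamma_1$ and $\gamma_2$ scale by \emph{different} monomials in $(A,B)$ so that their ratio (when both nonzero) is \emph{not} a free invariant but can be normalized to $1$, while if exactly one vanishes the nonzero one normalizes to $1$; this yields the three algebras $R_{n+1,1}(0,1,0)$, $R_{n+1,1}(1,0,0)$, $R_{n+1,1}(1,1,0)$.

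Finally I would verify that these four algebras are pairwise non-isomorphic. For this I would rely on invariants: $\gamma_3\neq0$ distinguishes $R_{n+1,1}(0,0,1)$ from the other three (e.g. via $\dim(\langle[x,x]\rangle)$ or the fact that $R^2$ or $R/I$-liftings differ); among the remaining three, the subtlety is separating $(1,0,0)$, $(0,1,0)$, $(1,1,0)$, which I would do by examining which of the specific $e_n$-valued brackets ($[x,e_1]$ vs $[x,e_2]$) are nonzero — since the automorphism group cannot interchange the roles of $e_1$ and $e_2$ (they sit in different graded components $L_1$ and $L_2$ and have different eigenvalues under $\mathcal R_x$), the pattern of vanishing of $\gamma_1$ and $\gamma_2$ is an isomorphism invariant. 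The main obstacle I anticipate is bookkeeping: correctly tracking all the induced $e_n$-corrections when substituting $x'=x+\sum c_ie_i$ and all the rescaling exponents, and in particular confirming that no exotic change of basis (e.g. one mixing $x$ nontrivially with $e_1$, or using the $\gamma_4,\gamma_5$ freedom that was already absorbed) reopens a collapse between the $\gamma_3=0$ representatives. Once the group action on $(\gamma_1,\gamma_2,\gamma_3)$ is pinned down explicitly, the orbit count and the non-isomorphism claims follow routinely.
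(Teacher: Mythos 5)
Your overall strategy (compute the action of the admissible changes of basis on $(\gamma_1,\gamma_2,\gamma_3)$ and read off orbits and invariants) is the same as the paper's, but your central structural claim about that action is backwards, and this breaks the proof. You assert that $\gamma_3$ cannot be traded against $\gamma_1,\gamma_2$, so that ``$\gamma_3=0$ versus $\gamma_3\neq0$'' is an invariant dichotomy, and that within $\gamma_3\neq 0$ one can kill $\gamma_1,\gamma_2$. The actual transformation rules (which the paper computes from the general change $e_1'=\sum A_ie_i$, $e_2'=\sum B_ie_i$, $x'=x+\sum C_ie_i$) are
\[
\gamma_1'=\frac{\gamma_1}{A_1^{n-3}B_2},\qquad \gamma_2'=\frac{\gamma_2}{A_1^{n-2}},\qquad
\gamma_3'=\frac{\gamma_3+C_1\gamma_1+C_2\gamma_2}{A_1^{n-2}B_2}.
\]
So $\gamma_1$ and $\gamma_2$ only rescale (their vanishing pattern is the true invariant and they can never be killed when nonzero), whereas $\gamma_3$ is the parameter that shifts: since $[e_1,x]+[x,e_1]=\gamma_1e_n$ and $[e_2,x]+[x,e_2]=\gamma_2e_n$ (the algebra is not Lie!), substituting $x'=x+\sum C_ie_i$ adds $C_1\gamma_1+C_2\gamma_2$ to $\gamma_3$, so $\gamma_3$ can be absorbed whenever $(\gamma_1,\gamma_2)\neq(0,0)$. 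Concretely, $R_{n+1,1}(1,0,1)$ is isomorphic to $R_{n+1,1}(1,0,0)$, not to $R_{n+1,1}(0,0,1)$ as your scheme would assign it; and your non-isomorphism argument separating $R_{n+1,1}(0,0,1)$ from the other three via ``$\gamma_3\neq0$'' fails, because $\gamma_3\neq 0$ is not an isomorphism invariant (it can be created or destroyed for any algebra with $\gamma_1\neq0$ or $\gamma_2\neq0$). The correct separating invariant is the vanishing of $\gamma_1$ and $\gamma_2$, which you do use correctly for the three $\gamma_3=0$ classes; $\gamma_3$ matters only in the residual case $\gamma_1=\gamma_2=0$, where it is nonzero by hypothesis and normalizes to $1$.

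A secondary issue: you restrict the changes of basis a priori to $e_1'=Ae_1$, $e_2'=Be_2$ plus $e_n$-corrections and $x'=x+\sum c_ie_i$ without justification. The paper instead starts from the general transformation $e_1'=\sum A_ie_i$, $e_2'=\sum B_ie_i$, $x'=Dx+\sum C_ie_i$ and derives the constraints ($B_1=0$, $D=1$, $A_2=0$, the recursions for $A_i$, $B_i$, $C_i$) from the requirement that the structure constants of the family be preserved; this is exactly the step needed to rule out the ``exotic'' mixings you worry about at the end, and it is also where the coupling between the $A_i$ and $C_i$ arises that makes $\gamma_1,\gamma_2$ pure scalings. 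With the group action corrected as above, the case analysis ($\gamma_1=0$ or not, then $\gamma_2=0$ or not) gives the four representatives and their pairwise non-isomorphism exactly as in the paper.
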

\begin{proof}
We consider the general change of basis in the family
$R_{n+1,1}(\gamma_1, \gamma_2, \gamma_3)$:
\[e'_1 = \sum_{i=1}^{n}A_i\, e_i, \qquad e'_2 = \sum_{i=1}^{n}B_i\, e_i, \qquad x' = D\, x + \sum_{i=1}^{n}C_i\, e_i,\]
where $(A_1B_2 - B_1A_2)D \neq 0$.

Using $[e'_{i}, e'_1]=e_{i+1}', \ 2 \leq i \leq n-1$, the table of
multiplication of $R_{n+1,1} (\gamma_1, \gamma_2, \gamma_3)$ and
an induction, we obtain
\[e_i'=A_1^{i-3}\,\sum_{j=i}^{n}(A_1B_{j+2-i}-B_1A_{j+2-i})\, e_j, \ 3\leq i\leq n.\]

From the equalities
\[0=[e'_3, e'_2] =B_1\sum_{j=4}^{n}
(A_1B_{j-2}-B_1A_{j-2})\, e_j\] we have $B_1 =0$.

 Consider the multiplications

\begin{align*}
[e'_1, x'] &= A_1D\, e_1 - D\sum_{i=2}^{n-1}A_i(n-i)\, e_i+ \sum_{i=3}^{n}(A_{i-1}C_1-A_1C_{i-1})\, e_i \\
& {} = A_1De_1 - A_2D(n-2)\, e_2+ \sum_{i=3}^{n-1}\big(A_{i-1}C_1-A_1C_{i-1}-(n-i)A_iD\big)\, e_i \\
 & {}+ {} (A_{n-1}C_1-A_1C_{n-1})\, e_n.
\end{align*}
On the other hand \[[e'_1,x'] = e'_1 = \sum_{i=1}^{n}A_i\, e_i.\]

Comparing the coefficients of the basic elements we derive:
\[D=1,\quad A_2=0, \quad A_{i+1}
=\frac {A_1C_i-A_iC_1} {i-n-1}, \  \ 2 \leq i \leq n-2, \quad A_n =
A_1 C_{n-1} - A_{n-1} C_n \,.\]

From the equalities
\begin{align*}
-(n-2)\sum_{i=2}^{n}B_ie_i & {}= {} -(n-2) e'_2=[e'_2, x'] = \big[\sum_{i=2}^{n}B_ie_i, x + \sum_{i=1}^{n}C_ie_i\big] \\
& {}= {} -\sum_{i=2}^{n-1}B_i(n-i)e_i+
C_1\sum_{i=3}^{n}B_{i-1}e_i \\
& {}= {} -B_2(n-2)e_2 +
\sum_{i=3}^{n-1}\big(B_{i-1}C_1-B_i(n-i)\big)e_i+B_{n-1}C_1e_n\,,
\end{align*}
we deduce the following  restrictions:
\[B_i = (-1)^i\frac {B_2C_1^{i-2}} {(i-2)!},  \qquad  3 \leq i \leq n \,.\]

In an analogous way, comparing coefficients at the basic element
$e_n$ in the equalities, we obtain:
\[\gamma'_3
A_1^{n-2}B_2\, e_n=\gamma'_3\, e'_n =[x', x'] = (\gamma_3 +C_1
\gamma_1 +C_2\gamma_2)\, e_n\] and so
\[\gamma'_3 = \frac {\gamma_3 +C_1 \gamma_1 +C_2\gamma_2} {A_1^{n-2}B_2}.\]

With a similar argument,
\[-e'_1 + A_1^{n-2}B_2\gamma_1'\, e_n=-e'_1 + \gamma_1'\, e'_n =[x',e'_1] = -e'_1 + A_1\gamma_1\, e_n.\]
and
\[-(n-2)\, e'_2 + A_1^{n-2}B_2\gamma_2'\, e_n=(n-2)\, e'_2 + \gamma_2'\, e'_n=[x',e'_2]=(n-2)\, e'_2 +B_2\gamma_2\, e_n\]
we obtain
\[\gamma_1' = \frac {\gamma_1} {A_1^{n-3}B_2} \qquad \text{and} \qquad \gamma_2' = \frac {\gamma_2} {A_1^{n-2}}.\]

Now we shall consider the possible cases of the parameters
$\{\gamma_1, \gamma_2,  \gamma_3\}$.

\textbf{Case 1.} Let $\gamma_1 = 0$. Then $\gamma_1' = 0$.

If $\gamma_2 = 0$, then $\gamma_2' = 0$ and $\gamma'_3 = \frac
{\gamma_3} {A_1^{n-2}B_2} \neq 0$. Putting $B_2 = \frac {\gamma_3}
{A_1^{n-2}}$, then we have that $\gamma'_3=1$, so the algebra is
$R_{n+1,1}(0, 0, 1)$.

If $\gamma_2 \neq 0$, then putting $A_1 = \sqrt[\uproot{3} n-2]{\gamma_2}$
and $C_2 = -\frac {\gamma_3} {\gamma_2}$, we get $\gamma_2' = 1$
and $\gamma'_3 = 0$, i.e. we obtain the algebra $R_{n+1,1}(0, 1,
0)$.

\textbf{Case 2.} Let  $\gamma_1 \neq 0$. Then putting $B_2 =
\frac {\gamma_1} {A_1^{n-3}}$ and $C_1 = -\frac {\gamma_3+C_2
\gamma_2} {\gamma_1}$, we have:
\[\gamma_1' = 1, \qquad \gamma_2' = \dfrac {\gamma_2} {A_1^{n-2}}, \qquad \gamma_3' =0.\]

If $\gamma_2 \neq 0$, then putting $A_1 = \sqrt[\uproot{3} n-2]{\gamma_2}$ we
have that $\gamma_2' = 1$, so we obtain the algebra $R_{n+1,1}(1,
1, 0)$.

If $\gamma_2 = 0$, then we get the algebra $R_{n+1,1}(1, 0, 0)$.
\end{proof}

\textbf{Case $Q_{2n}$.} Similarly as above, from Propositions
\ref{prop2.13} and \ref{prop2.14}, we conclude that solvable
non-Lie Leibniz algebras with nilradical $Q_{2n}$ exist only in
the case of $\dim R=2n+1$ and they are isomorphic to $Q_{2n+1,1}(\alpha)$ for
$\alpha = \frac {2n-3} 2$. Thus, we have
\[R_{2n+1,1}:\left\{\begin{aligned}
{}[e_i,e_1] & = - [e_1, e_i] = e_{i+1},&& 2\leq i \leq 2n-2,\\
[e_i,e_{2n+1-i}] & = - [e_{2n+1-i}, e_i] = (-1)^i\, e_{2n},&&  2\leq i \leq n,\\
 [e_1, x]& = e_1, && \\
 [x,e_1]& = -e_1+ \gamma_1\, e_n,&&\\
  [e_2, x]& = \frac {2n-3}{2}\, e_2, &&\\
  [x, e_2]& = -\frac{2n-3}{2}\,  e_2 +\gamma_2\, e_n, &&\\
  [e_i,x]& = - [x,e_i] = \frac{2n+2i-7} 2\, e_i, &&3\leq i\leq 2n-1,\\
[x, x]&  = \gamma_3\, e_n,&&
\end{aligned}\right.\]
where $(\gamma_1, \gamma_2, \gamma_3)\neq (0,0, 0)$.

\begin{thm}  Any $(2n+1)$-dimensional solvable Leibniz algebra with nilradical $Q_{2n}$ is isomorphic to one of the following pairwise non isomorphic algebras:
\[R_{2n+1,1}(0, 0, 1), \quad \ R_{2n+1,1}(0, 1, 0), \quad \ R_{2n+1,1}(1, 1, 0), \quad \ R_{2n+1,1}(1, 0, 0).\]
\end{thm}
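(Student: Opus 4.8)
The plan is to mimic almost verbatim the proof of Theorem~\ref{thm4.1}, since the algebra $R_{2n+1,1}(\gamma_1,\gamma_2,\gamma_3)$ has the same structural shape: $e_1$ and $e_2$ play the role of generators acted on non-trivially by $x$, the chain $e_3,\dots,e_{2n-1}$ is generated from $e_1$ by right multiplication, and the parameters $\gamma_1,\gamma_2,\gamma_3$ sit in the one-dimensional ideal $I=\langle e_n\rangle$ (note $e_n$ here, \emph{not} $e_{2n}$; the center-type ideal coming from squares is $\langle e_n\rangle$, and it is this $e_n$ that the $\gamma_i$ multiply). First I would take the general change of basis
\[
e'_1=\sum_{i=1}^{2n}A_i\,e_i,\qquad e'_2=\sum_{i=1}^{2n}B_i\,e_i,\qquad x'=D\,x+\sum_{i=1}^{2n}C_i\,e_i,
\]
with $(A_1B_2-B_1A_2)D\neq 0$, and use the relations $[e'_i,e'_1]=e'_{i+1}$ together with an induction to express $e'_3,\dots,e'_{2n-1}$ (and then $e'_{2n}$ via the bracket $[e'_i,e'_{2n+1-i}]$) in terms of $A_i,B_i$; as before, the relation $[e'_3,e'_2]\in\langle e_{2n}\rangle$ forces $B_1=0$, and comparing $[e'_1,x']=e'_1$, $[e'_2,x']=\tfrac{2n-3}{2}e'_2$ coefficientwise forces $D=1$, $A_2=0$, and recursive formulas for the remaining $A_i,B_i$ in terms of $A_1,B_2,C_1,C_2$.

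Next I would read off the transformation rules for the parameters by comparing the coefficient of $e_n$ in $[x',x']$, $[x',e'_1]$ and $[x',e'_2]$, exactly as in the proof of Theorem~\ref{thm4.1}. I expect to obtain relations of the same form,
\[
\gamma'_1=\frac{\gamma_1}{A_1^{?}B_2},\qquad \gamma'_2=\frac{\gamma_2}{A_1^{?}},\qquad \gamma'_3=\frac{\gamma_3+C_1\gamma_1+C_2\gamma_2}{A_1^{?}B_2},
\]
where the precise exponents of $A_1$ are whatever is dictated by the recursion giving $e'_n$ as a multiple of $e_n$ (this is the only place the dimension $2n$ and the index $n$ enter, so the exponents will differ from the $n_{n,1}$ case but the \emph{shape} is identical). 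Granting these rules, the case analysis is then word-for-word the one in Theorem~\ref{thm4.1}: if $\gamma_1=\gamma_2=0$ then $\gamma_3\neq 0$ and we rescale $B_2$ to get $R_{2n+1,1}(0,0,1)$; if $\gamma_1=0,\gamma_2\neq 0$ we rescale $A_1$ and choose $C_2$ to kill $\gamma_3$, landing on $R_{2n+1,1}(0,1,0)$; if $\gamma_1\neq 0$ we rescale $B_2$, choose $C_1$ to kill $\gamma_3$, and then split on whether $\gamma_2$ vanishes, giving $R_{2n+1,1}(1,1,0)$ or $R_{2n+1,1}(1,0,0)$. Finally one checks the four algebras are pairwise non-isomorphic; this follows because the transformation group preserves the ``type'' of the parameter triple (which of $\gamma_1,\gamma_2$ vanish, and whether $\gamma_3$ can be normalized away), so no two of the four normal forms can be identified.

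The main obstacle I anticipate is purely bookkeeping: getting the exponents of $A_1$ (and the factor $B_2$) correct in the parameter-transformation formulas, since the induction producing $e'_i$ for $i$ up to $2n-1$ and then $e'_{2n}$ from the ``wedge'' relations $[e_i,e_{2n+1-i}]=(-1)^i e_{2n}$ is slightly more involved than the single chain in $n_{n,1}$, and one must be careful that the relevant $e_n$-coefficient really is the object being rescaled. A secondary point worth a line is to confirm that after imposing $[e'_i,x']=-[x',e'_i]$ and $[e'_{2n},x']=0$ no \emph{new} constraints appear forcing some $\gamma_i$ to vanish — i.e. that the ambient algebra is genuinely $Q_{2n+1,1}(\tfrac{2n-3}{2})$ extended by the $\gamma$'s and nothing collapses. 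Once the transformation rules are pinned down, the rest is a mechanical repetition of the earlier argument, so I would present the transformation formulas and then simply say ``arguing as in the proof of Theorem~\ref{thm4.1}'' for the case split and the non-isomorphism claim.
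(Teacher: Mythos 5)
Your overall strategy is exactly the paper's: its proof of this theorem consists of the single remark that one repeats the argument of Theorem \ref{thm4.1}, and your outline (general change of basis, $B_1=0$, $D=1$, $A_2=0$, transformation rules for the $\gamma_i$, the same four-way case split and non-isomorphism check) is precisely that argument transplanted to $Q_{2n}$. However, the one structural point you single out --- and emphasize --- is backwards. The parameters $\gamma_1,\gamma_2,\gamma_3$ must multiply $e_{2n}$, not $e_n$. Indeed $[x,x]$, $[x,e_1]+[e_1,x]$ and $[x,e_2]+[e_2,x]$ all lie in $\Ann_r(R)$, and an element of the nilradical lying in $\Ann_r(R)$ must in particular right-annihilate $Q_{2n}$, i.e.\ belong to its centre $\langle e_{2n}\rangle$; the vector $e_n$ does not qualify, since $[e_1,e_n]=-e_{n+1}\neq 0$. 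Hence $I=\langle e_{2n}\rangle$, and the occurrences of ``$e_n$'' in the paper's displays for the $Q_{2n}$ case (both in the identification of $I$ and in the table of $R_{2n+1,1}$) are typos carried over from the $n_{n,1}$ case. If you literally ran your computation comparing coefficients at $e_n$, the step would fail: the right-annihilator constraint forces the $e_n$-components of those brackets to vanish, so the family would collapse to $\gamma_1=\gamma_2=\gamma_3=0$, contradicting $(\gamma_1,\gamma_2,\gamma_3)\neq(0,0,0)$.

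With the parameters placed on $e_{2n}$, the rest goes through as you describe, with one bookkeeping difference worth recording: $e'_{2n}$ is produced by the wedge relations, e.g.\ $e'_{2n}=[e'_2,e'_{2n-1}]$, so after $B_1=0$ and $A_2=0$ its $e_{2n}$-coefficient is $A_1^{2n-3}B_2^{2}$ --- quadratic in $B_2$, unlike the single factor $B_2$ in the $n_{n,1}$ case --- and the denominators in the rules for $\gamma_1',\gamma_2',\gamma_3'$ change accordingly (your displayed guesses with a single $B_2$ are not quite right, though you hedged on the exponents). Since we work over $\mathbb{C}$ and $A_1$, $B_2$, $C_1$, $C_2$ remain free, the normalizations, the four-case analysis, and the pairwise non-isomorphism argument are unaffected; your secondary worry about new constraints is also harmless, because $e_{2n}\in\Ann_r(R)$ gives $[x,e_{2n}]=0$, and the resulting Leibniz-identity constraint is exactly what pinned the quotient down to $Q_{2n+1,1}\bigl(\tfrac{2n-3}{2}\bigr)$ before the theorem was stated.
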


\begin{proof} The proof is carried out by applying similar arguments as in the proof of Theorem \ref{thm4.1}
\end{proof}

\section{solvable Leibniz algebras whose nilradical is a non-Lie Leibniz algebra} \label{S:nil_leib}

In the following proposition we describe derivations of the algebra $F_n^1$.
\begin{prop} \label{prop31} Any derivation of the algebra $F_n^1$ has the following matrix form:
 \[
\begin{pmatrix}
\alpha_1& \alpha_2&\alpha_3&\alpha_4&\dots&\alpha_{n-1}&\alpha_n\\
0& \alpha_1+\alpha_2&\alpha_3&\alpha_4&\dots&\alpha_{n-1}& \beta\\
0& 0&2\alpha_1+\alpha_2&\alpha_3&\dots&\alpha_{n-2}& \alpha_{n-1}\\
0& 0&0&3\alpha_1+\alpha_2&\dots&\alpha_{n-3}&\alpha_{n-2}\\
\vdots&\vdots&\vdots&\vdots&\dots&\vdots&\vdots\\
0&0&0&0&\dots&0&(n-1)\alpha_1+\alpha_2
\end{pmatrix}.
\]
\end{prop}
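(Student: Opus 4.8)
The plan is to write an arbitrary derivation $d$ of $F_n^1$ as a matrix $(d_{ij})$ in the basis $\{e_1,\dots,e_n\}$ (so $d(e_j)=\sum_i d_{ij}e_i$) and extract all constraints from the defining relations $[e_1,e_1]=e_3$, $[e_i,e_1]=e_{i+1}$ for $2\le i\le n-1$, together with the vanishing of all other products. First I would record the key structural fact that follows from the multiplication table: $e_3,\dots,e_n$ are generated from $e_1$ (and $e_2$), since $e_3=[e_1,e_1]$ and $e_{k+1}=[e_k,e_1]$ for $k\ge 2$; in particular $e_k=[\cdots[[e_2,e_1],e_1]\cdots,e_1]$ ($k-2$ brackets) for $k\ge 3$, and also $e_k = [\cdots[[e_1,e_1],e_1]\cdots,e_1]$ ($k-2$ brackets) so that $[e_1,e_1]$ and $[e_2,e_1]$ both equal $e_3$. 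This means a derivation is essentially determined by $d(e_1)$ and $d(e_2)$, and the bulk of the work is propagating these two choices through the relations and checking consistency.

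The concrete steps: (1) Apply $d$ to $e_3=[e_1,e_1]$, i.e. $d(e_3)=[d(e_1),e_1]+[e_1,d(e_1)]$. Writing $d(e_1)=\sum_i\alpha_i e_i$ and using that $[e_i,e_1]$ is known while $[e_1,e_i]=0$ for $i\ge 2$ and $[e_1,e_1]=e_3$, this expresses $d(e_3)$ in terms of the $\alpha_i$ and pins down the third column. (2) Apply $d$ to $e_{i+1}=[e_i,e_1]$ inductively for $i=2,\dots,n-1$: $d(e_{i+1})=[d(e_i),e_1]+[e_i,d(e_1)]$. Since $[e_i,d(e_1)]=\alpha_1[e_i,e_1]=\alpha_1 e_{i+1}$ (all other terms die because $[e_i,e_j]=0$ for $j\ge 2$), this is a clean recursion: each new column is obtained from the previous one by the shift $e_j\mapsto e_{j+1}$ coming from $\mathcal R_{e_1}$, plus a correction $\alpha_1 e_{i+1}$. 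Iterating gives the stated upper-triangular Toeplitz-type pattern with diagonal entries $k\alpha_1+\alpha_2$. (3) Impose the remaining relations that must be killed: $[e_1,e_i]=0$ for $i\ge 2$, $[e_i,e_j]=0$ for $i\ge 2,\ j\ge 2$, and $[e_i,e_1]=0$ is not imposed but $d$ of every vanishing product must vanish. Applying $d$ to $0=[e_1,e_2]$ gives $[d(e_1),e_2]+[e_1,d(e_2)]=0$; writing $d(e_2)=\sum_i\beta_i e_i$ and using $[e_1,e_2]=0$, $[e_k,e_2]=0$ for all $k$, etc., this forces relations tying the $\beta_i$ to the $\alpha_i$ — in particular it is here that one sees $d(e_2)$ must have the form $\beta_1 = \alpha_1+\alpha_2$ in the diagonal slot, the entry labelled $\beta$ is free, and the off-diagonal entries of the second column below the first two rows are forced to vanish. (4) Similarly $d$ applied to $0=[e_2,e_2]$ and $0=[e_i,e_j]$ for higher $i,j$ yields no new constraints beyond those already found (one should check these are automatically satisfied, which is the bookkeeping that makes the classification tight).

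The main obstacle I expect is step (3)–(4): checking that the matrix displayed is not merely necessary but sufficient, i.e. that every such matrix genuinely is a derivation. This requires verifying $d([x,y])=[d(x),y]+[x,d(y)]$ on all basis pairs $(e_i,e_j)$, not just the generating ones; the nontrivial identities reduce, via bilinearity and the fact that $\mathcal R_{e_1}$ is itself a derivation, to a finite check on the pairs $(e_1,e_1)$, $(e_i,e_1)$, $(e_1,e_i)$, $(e_i,e_j)$ with $i,j\ge 2$, and the last family is where one must be careful because of the special role of $e_3=[e_1,e_1]=[e_2,e_1]$ and the appearance of the free parameter $\beta$ in position $(2,n)$. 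Once this compatibility is confirmed the count of free parameters is $\alpha_1,\dots,\alpha_n,\beta$, matching the displayed matrix, and the proof is complete.
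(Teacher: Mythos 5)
Your plan coincides with the paper's proof in its skeleton: write $d(e_1)=\sum_{i=1}^n\alpha_ie_i$, $d(e_2)=\sum_{i=1}^n\beta_ie_i$, use the defining products, and induct along $[e_i,e_1]=e_{i+1}$; your steps (1) and (2) are exactly what the paper does. The genuine gap is in step (3). Applying $d$ to $0=[e_1,e_2]$ gives $[d(e_1),e_2]+[e_1,d(e_2)]=\beta_1e_3$, because every bracket with $e_2$ (indeed with any $e_j$, $j\ge2$) in the second argument vanishes and $[e_1,e_j]=0$ for $j\ge2$; the same happens for all other vanishing products involving $e_2$. Hence these relations force only $\beta_1=0$ and cannot, as you claim, tie $\beta_2,\dots,\beta_{n-1}$ to the $\alpha_i$'s. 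The entire content of the second row of the matrix comes instead from the coincidence you mention in your preamble but never actually use: $e_3=[e_1,e_1]=[e_2,e_1]$. One must compute $d(e_3)$ twice, as $d([e_1,e_1])=(2\alpha_1+\alpha_2)e_3+\sum_{i=3}^{n-1}\alpha_ie_{i+1}$ and as $d([e_2,e_1])=(\alpha_1+\beta_2)e_3+\sum_{i=3}^{n-1}\beta_ie_{i+1}$, and compare the two; this is what yields $\beta_2=\alpha_1+\alpha_2$ and $\beta_i=\alpha_i$ for $3\le i\le n-1$, with only $\beta_n=\beta$ free --- precisely the paper's argument. Note also that your recursion in step (2), if started at $i=2$ with an unconstrained $d(e_2)$, produces diagonal entries $\beta_2+(i-2)\alpha_1$, so the pattern $(i-1)\alpha_1+\alpha_2$ you assert already presupposes this missing comparison.

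Two smaller points. You fix the convention $d(e_j)=\sum_i d_{ij}e_i$ (images in columns) but then read the displayed matrix by rows ($\beta$ in position $(2,n)$, the ``second column below the first two rows''): in the paper the $i$-th row of the matrix lists the coordinates of $d(e_i)$, so keep one convention throughout; likewise the relation you want is $\beta_2=\alpha_1+\alpha_2$, not $\beta_1=\alpha_1+\alpha_2$ ($\beta_1$ is the coefficient that vanishes). Finally, your step (4) --- verifying that every matrix of this shape really is a derivation --- is not required by the statement, which only asserts that every derivation has this form; the paper proves only this necessity, so your sufficiency check is harmless extra work, though it is a reasonable way to confirm the parameter count $\alpha_1,\dots,\alpha_n,\beta$.
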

\begin{proof} Let $d$ be a derivation of the algebra. We set
\[d(e_1)=\sum_{i=1}^{n}\alpha_i\, e_i, \qquad d(e_2)=\sum_{i=1}^{n}\beta_i\, e_i.\]
From the equality
\[0=d([e_1,e_2])=[d(e_1),e_2]+[e_1,d(e_2)]=\beta_1\, e_3\]
we get $\beta_1=0$.

Further we have
\[d(e_3)=d([e_1,e_1])=[d(e_1),e_1]+[e_1,d(e_1)]=
(2\alpha_1+\alpha_2)\, e_3+\sum_{i=3}^{n-1}\alpha_i\, e_{i+1}.\] On
the other hand
\[d(e_3)=d([e_2,e_1])=[d(e_2),e_1]+[e_2,d(e_1)]=(\alpha_1+\beta_2)\, e_3
+\sum_{i=3}^{n-1}\beta_i\, e_{i+1}.\]
Therefore, $\beta_2=\alpha_1+\alpha_2, \ \beta_i=\alpha_i, \ \  3\leq i\leq n-1$.

With similar arguments applied on the products $[e_i, e_1]=
e_{i+1}$ and with an induction on $i$, it is easy to check that the
following identities hold for $3 \leq i \leq n$:
\[d(e_i)=\big((i-1)\alpha_1+\alpha_2\big)\, e_i+\sum\limits_{j=i+1}^n\alpha_{j-i+2}\, e_j, \quad 3\leq i\leq n \,.\]
\end{proof}
From Proposition \ref{prop31} we conclude that the  number of
nil-independent outer derivations of the algebra $F_n^1$ is equal
to two. Therefore, by arguments after Proposition \ref{prop2.14}
we have that any solvable Leibniz algebra whose nilradical is
$F_n^1$ has dimension either $n+1$ or $n+2$.

\subsection{Solvable Leibniz algebras with nilradical $F_n^1$}

\

Below we present the  description of such Leibniz algebras when
dimension is equal to $n+1$.

\begin{thm} \label{thm33} An arbitrary $(n+1)$-dimensional solvable Leibniz algebra with nilradical $F_n^1$ is isomorphic to one of the following pairwise non-isomorphic algebras:

\[R_1(\alpha): \left\{\begin{aligned}
{} [e_1,e_1] & =e_3, & [e_i,e_1]& =e_{i+1}, && 2\leq i\leq n-1,\\
[e_1,x]& =-e_1, & [e_2,x]& =-e_2+\alpha\, e_n, && \alpha\in \{0,1\}  \\
[x,e_1] & =e_1,  &  [e_i,x] & =-(i-1)\, e_i, && 3\leq i\leq n \,.
\end{aligned}\right.\]
\[R_2(\alpha_4, \dots, \alpha_{n-1}, \alpha):
\left\{\begin{aligned}
{} [e_1,e_1]& =e_3, & [e_i,e_1]&=e_{i+1}, && 2\leq i\leq n-1,\\
 [e_1,x]&=e_2+\sum\limits_{i=4}^{n-1}\alpha_i\, e_i+\alpha\, e_n, & [e_2,x]&=e_2+\sum\limits_{i=4}^{n-1}\alpha_i\, e_i,&& \\
& & [e_i,x]&=e_i+\sum\limits_{j=i+2}^n\alpha_{j-i+2}\, e_j, && 3\leq i\leq n \,.
\end{aligned}\right.\]
\[R_3(\alpha_4, \dots, \alpha_{n-1}):
\left\{\begin{aligned}
{} [e_1,e_1]&=e_3, & [e_i,e_1]&=e_{i+1}, && 2\leq i\leq n-1,\\
 [e_1,x]& =e_2+\sum\limits_{i=4}^{n-1}\alpha_i\, e_i, & [e_2,x]&=e_2+\sum\limits_{i=4}^{n-1}\alpha_i\, e_i+e_n,&& \\
& & [e_i,x]& =e_i+\sum\limits_{j=i+2}^n\alpha_{j-i+2}\, e_j, && 3\leq i\leq n.
\end{aligned}\right.\]

Moreover,  the first non-vanishing parameter $\{\alpha_4, \dots,
\alpha_{n-1}\}$ in the algebras $R_2(\alpha_4, \dots,
\alpha_{n-1}, \alpha)$ and $R_3(\alpha_4, \dots, \alpha_{n-1})$
can be scaled to 1.
\end{thm}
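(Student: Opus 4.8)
The plan is to follow the scheme of Section~\ref{S:nil_lie}. Since $\dim R=n+1$ and the nilradical is $N=F_n^1$, the complementary subspace $Q$ is one-dimensional, $Q=\langle x\rangle$, and by the discussion after Proposition~\ref{prop2.14} the operator $\mathcal{R}_x|_N$ is a non-nilpotent derivation of $N$; hence, by Proposition~\ref{prop31}, it is the matrix displayed there for some scalars $\alpha_1,\dots,\alpha_n,\beta$ with $(\alpha_1,\alpha_2)\neq(0,0)$ (otherwise its diagonal, and so the operator itself, would be nilpotent). This fixes all products $[e_i,x]$. The remaining admissible moves are: rescaling $x$; replacing $x$ by $x+\sum_i c_ie_i$, which changes $\mathcal{R}_x|_N$ only by the inner derivation $c_1\mathcal{R}_{e_1}$ (since $\mathcal{R}_{e_i}|_N=0$ for $i\geq2$) and so lets us normalize $\alpha_3=0$; and automorphisms of $F_n^1$ of the kind used in the proof of Theorem~\ref{thm4.1}.

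Next I would determine the products $[x,e_i]$ and $[x,x]$. Since the ideal generated by the squares lies in $\Ann_r(R)$, and the multiplication of $F_n^1$ gives $\Ann_r(R)\subseteq\langle e_2,\dots,e_n\rangle$, we get $[x,x]\in\langle e_2,\dots,e_n\rangle$, no $e_1$-component in $[x,e_i]$ for $i\geq2$, and $[x,e_1]=-\alpha_1e_1+(\text{terms in }e_2,\dots,e_n)$. Applying the Leibniz identity to $(x,e_1,e_1)$ yields $[x,e_3]=[x,[e_1,e_1]]=0$; combining $e_{i+1}=[e_i,e_1]$ (for $2\leq i\leq n-1$) with $[[x,e_1],e_i]=0$ (valid because $e_i\in\Ann_r(N)$ for $i\geq2$), an induction gives $[x,e_i]=0$ for $3\leq i\leq n$ and $[x,e_2]\in\langle e_n\rangle$. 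The Leibniz identity for $(x,x,e_2)$ and $(x,x,e_1)$, together with further adjustments of $x$ by elements of $N$, then forces $[x,e_2]=0$, $[x,e_1]=-\alpha_1e_1$ and $[x,x]=0$, so that the parameters surviving are $\alpha_1,\alpha_2,\alpha_4,\dots,\alpha_{n-1},\alpha_n,\beta$.

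Then I would split into two cases according to whether $\mathcal{R}_x$ is invertible on $N$. If $\alpha_1\neq0$, rescale $x$ so that $\alpha_1=-1$; after killing $\alpha_4,\dots,\alpha_{n-1}$ by automorphisms of $F_n^1$, the derivation property applied to $e_3=[e_1,e_1]$ (whose $\mathcal{R}_x$-weight is forced to be twice that of $e_1$) yields $\alpha_2=0$, and a further change of basis of the same kind removes $\alpha_n$; only $\beta$ remains, and a last rescaling of $e_1$ normalizes it to $\alpha\in\{0,1\}$, giving the family $R_1(\alpha)$. If $\alpha_1=0$, then $\alpha_2\neq0$ and we set $\alpha_2=1$; now $\mathcal{R}_x$ acts on $\langle e_2,\dots,e_n\rangle$ with the single non-zero weight $1$, the parameters $\alpha_4,\dots,\alpha_{n-1}$ survive (their first non-vanishing one scaled to $1$ by a rescaling of $e_1$), and the remaining freedom brings the pair $(\alpha_n,\beta)$ of $e_n$-coefficients of $[e_1,x]$ and $[e_2,x]$ either to $(\alpha,0)$, giving $R_2(\alpha_4,\dots,\alpha_{n-1},\alpha)$, or to $(0,1)$, giving $R_3(\alpha_4,\dots,\alpha_{n-1})$. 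Finally I would check pairwise non-isomorphism: $R_1(\alpha)$ is distinguished from $R_2,R_3$ because $\mathcal{R}_x$ is invertible on $N$ only in the former, and inside and between $R_2$ and $R_3$ one verifies that the scalars $\alpha_4,\dots,\alpha_{n-1}$ and the position (in $[e_1,x]$ versus $[e_2,x]$) and value of the surviving $e_n$-coefficient are invariants under all admissible changes of basis.

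I expect the main obstacle to be the case $\alpha_1=0$: under the comparatively large group of admissible basis changes one must pin down exactly which combinations of $\alpha_4,\dots,\alpha_{n-1},\alpha_n,\beta$ are invariant, show that $\alpha_n$ and $\beta$ cannot be removed simultaneously, and confirm the non-isomorphism of the members of, and the non-isomorphism between, the families $R_2$ and $R_3$ --- the computationally heaviest step, entirely analogous to the $S_{n+1,4}$ branch in the Lie case of Theorem~\ref{thm2.11}. One should also treat separately the small values of $n$, where the list $\{\alpha_4,\dots,\alpha_{n-1}\}$ is empty.
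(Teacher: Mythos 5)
Your plan is essentially the paper's proof (Proposition \ref{prop31} fixes the products $[e_i,x]$, Leibniz identities and the right annihilator pin down $[x,e_i]$ and $[x,x]$, then a case split on $\alpha_1$ and normalizing changes of basis yield $R_1,R_2,R_3$), but one pivotal step is justified by an argument that fails. In the case $\alpha_1\neq 0$ you claim that $\alpha_2=0$ follows from ``the derivation property applied to $e_3=[e_1,e_1]$, whose $\mathcal{R}_x$-weight is forced to be twice that of $e_1$.'' This is not so: because $[e_2,e_1]=e_3$, the $e_2$-component $\alpha_2$ of $\mathcal{R}_x(e_1)$ feeds into $\mathcal{R}_x(e_3)$, and the derivation property only gives the weight $2\alpha_1+\alpha_2$ for $e_3$ --- exactly what Proposition \ref{prop31} already records, and every value of $\alpha_2$ does occur for a genuine derivation, so no constraint can be extracted this way (compare the Lie case $S_{n+1}(\alpha,\beta)$ of Theorem \ref{thm2.11}, where both diagonal parameters survive). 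The vanishing of $\alpha_2$ here is a specifically Leibniz phenomenon: in the paper it comes from the identities on $\{e_1,x,e_1\}$, $\{e_1,x,x\}$ and $\{x,e_1,x\}$, i.e.\ from using $x$ and $e_1$ as \emph{left} factors once one knows $[x,e_1]=e_1$ and $[x,e_i]=0$ for $i\geq 2$. Your sketch has these ingredients available but does not invoke them at this point, and as written the reduction of Case 1 to the one-parameter family $R_1(\alpha)$ is not established.

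Two further inaccuracies in your pre-case analysis should be repaired. First, the $e_2$-component of $[x,e_1]$ cannot be removed by replacing $x$ by $x+v$ with $v\in N$, since $[v,e_1]\in\langle e_3,\dots,e_n\rangle$; it vanishes by the $(x,x,e_1)$ identity when $\alpha_1=0$ (using $\alpha_2\neq0$), and when $\alpha_1\neq0$ it is absorbed by redefining $e_1$, as in the paper's Case 1 --- so ``adjustments of $x$ by elements of $N$'' alone do not give $[x,e_1]=-\alpha_1e_1$. Second, killing $[x,x]$ by adjusting $x$ requires $\mathcal{R}_x$ to be invertible on $\langle e_2,\dots,e_n\rangle$, which you only know after the diagonal is settled (a weight $(i-1)\alpha_1+\alpha_2$ may vanish before you have $\alpha_2=0$, resp.\ $\alpha_2=1$); hence the normalizations $[x,x]=0$, $[x,e_1]=-\alpha_1e_1$ should be performed inside the cases, as the paper does. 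Relatedly, to kill the $e_n$-component of $[x,e_2]$ you need both identities $\{x,x,e_2\}$ and $\{x,e_2,x\}$ (giving $((n-1)\alpha_1+\alpha_2)\gamma_n=0$ and $(n-2)\alpha_1\gamma_n=0$); the pair you list leaves the degenerate possibility $(n-1)\alpha_1+\alpha_2=0$ open. With these repairs your argument becomes the paper's proof; the remaining Case 2 analysis and the normalization of $(\alpha_n,\beta)$ to $(\alpha,0)$ or $(0,1)$ match the paper's computations.
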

\begin{proof} From Theorem \ref{thm28} and arguments after Proposition \ref{prop2.14} we know that there exists a basis
$\{e_1, e_2, \dots, e_n, x\}$ such that the multiplication table
of the algebra $F_n^1$ is completed with the products coming from
 $\mathcal{R}_{{x |}_{F_n^1}}(e_i), \  1\leq i\leq n$, i.e.
 \begin{align*}
  [e_1,x]& =\sum\limits_{i=1}^n\alpha_i\, e_i, \qquad [e_2,x]=(\alpha_1+\alpha_2)\, e_2+\sum\limits_{i=3}^{n-1}\alpha_i\, e_i+
\beta\, e_n,\\
[e_i,x] & =\big((i-1)\alpha_1+\alpha_2\big)\, e_i+\sum\limits_{j=i+1}^n\alpha_{j-i+2}\, e_j, \quad 3\leq i\leq n \,.
 \end{align*}
Finally, we consider the remainder products as follows:
\[[x,e_1]=\sum\limits_{i=1}^n\beta_i\, e_i, \qquad  [x,e_2]=\sum\limits_{i=1}^n\gamma_i\, e_i, \qquad
[x,x]=\sum\limits_{i=1}^n\delta_i\, e_i.\]

From the chain of equalities
\[0=[x,e_3]=\big[x,[e_2,e_1]\big]=\big[[x,e_2],e_1\big]-\big[[x,e_1],e_2\big]=\big[[x,e_2],e_1\big]=
(\gamma_1+\gamma_2)\, e_3+\sum\limits_{i=4}^n\gamma_{i-1}\, e_i\] we
conclude that $\gamma_2=-\gamma_1, \   \gamma_i=0, \ 3\leq i\leq
n-1$.

Since $\gamma_1 e_3=\big[e_1,[x,e_2]\big]=\big[[e_1,x],e_2\big]-\big[[e_1,e_2],x\big]=0$, then $\gamma_1=0$.

The identity
\[ \big[e_1,[x,e_1]\big]=\big[[e_1,x],e_1\big]-\big[[e_1,e_1],x\big] \] implies
$\beta_1=-\alpha_1$.

Applying the Leibniz identity to the elements of the form $\{x, x,
e_2\}$ and $\{x, e_2, x\}$, we conclude that:
\[\left\{\begin{aligned}
 \big((n-1)\alpha_1+\alpha_2\big)\gamma_n & =0, \\
 (n-2)\alpha_1\gamma_n & =0\,.
\end{aligned}\right.\]

Note that $\gamma_n=0$ \big(otherwise $\alpha_1=\alpha_2=0$ and then
we get a contradiction with the non-nilpotency of the derivation $D$
(see Proposition \ref{prop31})\big).

Now we are going to discuss the possible cases of the parameters
$\alpha_1$ and $\alpha_2$.

\textbf{Case 1.} Let $\alpha_1\neq0$. Then taking the following
change of basis:
\[x'=-\frac{1}{\alpha_1}x, \ e_1'=e_1-\frac{1}{\alpha_1}\sum\limits_{i=2}^n\beta_i\, e_i, \
e_i'=-\frac{1}{\alpha_1}((-\alpha_1+\beta_2) \, e_i
+\sum\limits_{j=i+1}^n\beta_{j-i+2}\, e_j), \ 2\leq i\leq n,\]
we obtain
\begin{align*}
[e_1,e_1]& =e_3, & [e_1,x] & =\sum\limits_{i=1}^n\mu_i\, e_i, & [e_i,e_1]& =e_{i+1},&& 2\leq i\leq n-1, \\
 [x,e_1] & =e_1, &
[e_2,x] & =\sum\limits_{i=1}^n\eta_i\, e_i, & [x,e_2] & =0, &
[x,x] =\sum\limits_{i=1}^n\theta_i\, e_i \,.
\end{align*}

 From the equalities
\[0=\big[[e_1,e_2],x\big]=\big[e_1,[e_2,x]\big]+\big[[e_1,x],e_2\big]=\big[e_1,\sum\limits_{i=1}^n\eta_i\,e_i\big]=\eta_1\, e_3 \ \text{we have} \ \eta_1=0\,.\]

Consider
\begin{align*}
[e_3,x]& {}= {} \big[[e_1,e_1],x\big]=\big[e_1,[e_1,x]\big]+\big[[e_1,x],e_1\big]\\
& {}= {} \mu_1\, e_3+(\mu_1+\mu_2)\, e_3+\sum_{i=3}^{n-1}\mu_i\, e_{i+1}=(2\mu_1+\mu_2)\, e_3+\sum_{i=3}^{n-1}\mu_i\, e_{i+1}\,.
\end{align*}
On the other hand
\[ [e_3,x]=\big[[e_2,e_1],x\big]=\big[e_2,[e_1,x]\big]+\big[[e_2,x],e_1\big]=
\mu_1\, e_3+\eta_2\, e_3+\sum_{i=3}^{n-1}\eta_i\, e_{i+1}=
(\mu_1+\eta_2)\, e_3+\sum_{i=3}^{n-1}\eta_i\, e_{i+1}.\] The
comparison of both linear combinations implies that:
\[\eta_2=\mu_1+\mu_2,\qquad \eta_i=\mu_i, \quad 3\leq i\leq n-1,\]
 that it  is to say:
\[[e_2,x]=(\mu_1+\mu_2)\, e_2+\sum\limits_{i=3}^{n-1}\mu_i\, e_i+\eta_n\, e_n \qquad
\text{and} \qquad [e_3,x]= (2\mu_1+\mu_2)\, e_3+\sum_{i=3}^{n-1}\mu_i\,
e_{i+1}.\] Now we shall prove the following equalities by an induction on
$i$:
\begin{equation} \label{E:ind}
[e_i,x]=\big((i-1)\mu_1+\mu_2\big)\, e_i+\sum\limits_{j=i+1}^n\mu_{j-i+2}\, e_j, \quad 3\leq i\leq n.
\end{equation}
Obviously, the equality holds for $i=3$. Let us assume that the
equality holds for $3< i < n$, and we prove it for $i+1$:
\begin{align*}
[e_{i+1},x]& {}={}\big[[e_i,e_1],x\big]=\big[e_i,[e_1,x]\big]+\big[[e_i,x],e_1\big] \\
& {}={} \mu_1\, e_{i+1}+\big((i-1)\mu_1+\mu_2\big)\, e_{i+1}+
\sum\limits_{j=i+2}^n\mu_{j-i+1}\, e_j=
(i\mu_1+\mu_2)\, e_{i+1}+\sum\limits_{j=i+2}^n\mu_{j-i+1}\, e_j \,;
\end{align*}
 so the induction proves the equalities \eqref{E:ind} for any $i, \ 3\leq i\leq n$.

 Applying the Leibniz identity to the elements $\{e_1, x,
e_1\}, \ \{e_1, x, x\}, \ \{x, e_1, x\}$, we deduce that:
\[\mu_1=-1, \qquad \mu_2=\theta_1=0, \qquad \theta_i=\mu_{i+1}, \quad
2\leq i\leq n-1.\]

Below we summarize the table of multiplication of the algebra
\[\left\{ \begin{aligned}
{} [e_1,e_1]&=e_3 & [e_i,e_1]& =e_{i+1}, && 2\leq i\leq n-1, \\
[e_1,x]& =-e_1+\sum\limits_{i=3}^n\mu_i\, e_i, & [e_2,x]& =-e_2+\sum\limits_{i=3}^{n-1}\mu_i\, e_i+\eta_n\, e_n, &&\\
[x,e_1]& =e_1, & [e_i,x]& =-(i-1)\,e_i+\sum\limits_{j=i+1}^{n}\mu_{j-i+2}\, e_j, &&  3\leq i\leq n, \\
  [x,x]& =\sum\limits_{i=2}^{n-1}\mu_{i+1}\, e_i+\theta_n\, e_n. &&  & &
\end{aligned} \right.\]

Let us take the change of basis in the following form:
\[e_1'=e_1+\sum\limits_{i=3}^nA_ie_i, \ e_2'=e_2+\sum\limits_{i=3}^nA_ie_i, \ e_i'=e_i+\sum\limits_{j=i+1}^nA_{j-i+2}e_j,\
 3\leq i\leq n, \ x'=\sum\limits_{i=2}^{n-1}A_{i+1}e_i+Be_n+x.\]
where \[A_3=\mu_3,\  A_i=\frac{1}{(i-2)}\Big(\mu_i+\sum\limits_{j=3}^{i-1}A_j\mu_{i-j+2}\Big),\ 4\leq i\leq n \ \ \text{and}
 \ \ B=\frac{1}{n-1}\Big(\theta_n+\sum\limits_{j=3}^nA_j\mu_{n-j+3}\Big).\]

Then

\begin{align*}
[x',e_1'] & = \Big[\sum\limits_{i=2}^{n-1}A_{i+1}e_i+Be_n+x,e_1\Big]=e_1+\sum\limits_{i=3}^nA_ie_i=e_1', \\
[e_1',x'] & =  [e_1,x]+\sum\limits_{i=3}^nA_i[e_i,x]=
-e_1+\sum\limits_{i=3}^n\mu_ie_i+
\sum\limits_{i=3}^nA_i\,\Big(-(i-1)e_i+\sum\limits_{j=i+1}^{n}\mu_{j-i+2}e_j\Big) \\
&{}=-e_1-\sum\limits_{i=3}^nA_ie_i+\sum\limits_{i=3}^n\mu_ie_i-\sum\limits_{i=3}^nA_i(i-2)e_i+
\sum\limits_{i=3}^nA_i \,\Big(\sum\limits_{j=i+1}^{n}\mu_{j-i+2}e_j\Big) \\
&{}=-e_1-\sum\limits_{i=3}^nA_ie_i+
\sum\limits_{i=3}^n\mu_ie_i-\sum\limits_{i=3}^nA_i(i-2)e_i+
\sum\limits_{i=4}^n\Big(\sum\limits_{j=3}^{i-1}A_j\mu_{i-j+2}\Big)e_i\\
&{}= -e_1-\sum\limits_{i=3}^nA_ie_i+(\mu_3-A_3)e_3+
\sum\limits_{i=4}^n\Big(-A_i(i-2)+\mu_i+
\sum\limits_{j=3}^{i-1}A_j\mu_{i-j+2}\Big)e_i\\
&{}= -e_1-\sum\limits_{i=3}^nA_ie_i=-e_1' \\
[e_2',x']=& [e_2,x]+\sum\limits_{i=3}^nA_i[e_i,x]=-e_2+\sum\limits_{i=3}^{n-1}\mu_ie_i+\eta_n
e_n+\sum\limits_{i=3}^nA_i \, \Big(-(i-1)e_i+\sum\limits_{j=i+1}^{n}\mu_{j-i+2}\,e_j\Big)\\
&{}=-e_2-\sum\limits_{i=3}^nA_ie_i+\sum\limits_{i=3}^{n-1}\mu_ie_i+\eta_n
e_n- \sum\limits_{i=3}^nA_i(i-2)e_i+\sum\limits_{i=3}^nA_i\,\Big(\sum\limits_{j=i+1}^{n}\mu_{j-i+2}\,e_j\big)\\
&{}=-e_2-\sum\limits_{i=3}^nA_ie_i+\sum\limits_{i=3}^{n-1}\mu_ie_i+\eta_ne_n-
\sum\limits_{i=3}^nA_i(i-2)e_i+\sum\limits_{i=4}^n \,\Big(\sum\limits_{j=3}^{i-1}A_j\mu_{i-j+2}\Big)\,e_i\\
&{}=-e_2-\sum\limits_{i=3}^nA_ie_i+(\mu_3-A_3)e_3+\sum\limits_{i=4}^{n-1}\, \Big(-A_i(i-2)+
\mu_i+\sum\limits_{j=3}^{i-1}A_j\mu_{i-j+2}\Big)\,e_i \\
&{} \quad +\,\Big(\eta_n-(n-2)A_n + \sum\limits_{i=3}^{n-1}A_i\mu_{n-i+2}\Big)\,e_n= -e_2'+\eta'\,e_n',
\end{align*}
\begin{align*}
 [x',x']& = \sum\limits_{i=2}^{n-1}A_{i+1}[e_i,x]+B[e_n,x]+[x,x]\\
&{}= \sum\limits_{i=2}^{n-1}A_{i+1}\,\Big(-(i-1)e_i+\sum\limits_{j=i+1}^{n}\mu_{j-i+2}e_j\Big)-
B(n-1)e_n+\sum\limits_{i=2}^{n-1}\mu_{i+1}e_i + \theta_ne_n\\
&{}= -\sum\limits_{i=2}^{n-1}A_{i+1}(i-1)e_i+\sum\limits_{i=2}^{n-1}\mu_{i+1}e_i- B(n-1)e_n+\theta_ne_n+
\sum\limits_{i=2}^{n-1}A_{i+1}\,\Big(\sum\limits_{j=i+1}^{n}\mu_{j-i+2}e_j\Big)\\
&{} = -\sum\limits_{i=2}^{n-1}A_{i+1}(i-1)e_i+\sum\limits_{i=2}^{n-1}\mu_{i+1}e_i-B(n-1)e_n+\theta_n e_n+
\sum\limits_{i=3}^n \, \Big(\sum\limits_{j=3}^iA_j\mu_{i-j+3}\Big)\, e_i \\
&{}= (\mu_3-A_3)e_2+ \sum\limits_{i=3}^{n-1}\Big(-A_{i+1}(i-1)+\mu_{i+1}+
\sum\limits_{j=3}^iA_j\mu_{i-j+3}\Big)\,e_i\\
&{} \quad +\Big(-B(n-1)+\theta_n+\sum\limits_{j=3}^nA_j\mu_{n-j+3}\Big)\,e_n=0\,.
\end{align*}

With a similar induction as the given for  equations \eqref{E:ind},  it is
easy to check that the following equalities hold:
\[[e_i,x]=-(i-1)\, e_i, \qquad  3\leq i\leq n.\]

Thus, we obtain the following table of multiplication:
\[\left\{\begin{aligned}
{}[e_1,e_1]&=e_3, & [e_i,e_1]&=e_{i+1}, && 2\leq i\leq n-1,\\
[e_1,x]& =-e_1, & [e_2,x]& =-e_2+\eta e_n, &&\\
 [x,e_1]& =e_1,  & [e_i,x]& =-(i-1)e_i, && 3\leq i\leq n.
\end{aligned}\right.\]

Now we take the general change of basis in the following form:

\begin{equation} \label{E:gchbasis}
\left\{\begin{aligned}
 e_1'& = \sum\limits_{i=1}^nA_ie_i, && \\
 e_i'& =A_1^{i-2}\Big((A_1+A_2) \, e_i+\sum\limits_{j=i+1}^nA_{j-i+2}\, e_j\Big), &&  2\leq i\leq n,\\
 x'&=\sum\limits_{i=1}^nB_ie_i+B_{n+1}x, && \text{where} \  A_1(A_1+A_2)B_{n+1}\neq 0 \,.
\end{aligned}\right.
\end{equation}

Then from the equalities
\[\sum\limits_{i=1}^nA_ie_i=e_1'=[x',e_1']=A_1B_{n+1}e_1+A_1(B_1+B_2)e_3+
A_1 \, \big(\sum\limits_{j=4}^nB_{j-1}e_j\big)\]
we obtain:
\[B_{n+1}=1, \qquad A_2=0, \qquad  B_1+B_2=A_3/A_1,\qquad  B_i=A_{i+1}/A_1, \quad 3\leq
i\leq n-1.\]

Similarly, from
\[-A_1e_1-\sum\limits_{i=3}^nA_ie_i=-e_1'=[e_1',x']=
-A_1e_1+(A_1B_1-2A_3)e_3+\sum\limits_{i=4}^n\big(B_1A_{i-1}-(i-1)A_i\big)e_i\]
we obtain: \[B_1=A_3/A_1, \qquad A_i=\frac{A_3A_{i-1}}{(i-2)A_1}, \quad
4\leq i\leq n,\] consequently \[B_2=0, \qquad  \qquad
A_i=\frac{A_3^{i-2}}{(i-2)!A_1^{i-3}}, \quad 4\leq i\leq n.\]

Consider the product $[e_2',x']$, namely:

\begin{align*}
[e_2',x']& =\Big[A_1e_2+A_3e_3+\sum\limits_{i=4}^n\frac{A_3^{i-2}}{(i-2)!A_1^{i-3}}e_i,
\frac{A_3}{A_1}e_1+x\Big]\\
&{}=A_3e_3+\frac{A_3^2}{A_1}e_4+
\sum\limits_{i=5}^n\frac{A_3^{i-2}}{(i-3)!A_1^{i-3}}e_i-A_1e_2+A_1\eta e_n-2A_3e_3-
\sum\limits_{i=4}^n\frac{A_3^{i-2}(i-1)}{(i-2)!A_1^{i-3}}e_i\\
&{}=-A_1e_2-A_3e_3-\frac{A_3^2}{2A_1}e_4-\sum\limits_{i=5}^{n-1}\frac{A_3^{i-2}}{(i-2)!A_1^{i-3}}e_i+
\Big(A_1\eta-\frac{A_3^{n-2}}{(n-2)!A_1^{n-3}}\Big)e_n \,.
\end{align*}

On the other hand, we have
\[[e_2',x']=-e_2'+\eta'e_n'=-A_1e_2-A_3e_3-\frac{A_3^2}{2A_1}e_4-
\sum\limits_{i=4}^{n}\frac{A_3^{i-2}}{(i-2)!A_1^{i-3}}e_i+\eta'A_1^{n-1}e_n.\]

Therefore, the parameter $\eta'$ satisfies the relation
$\eta'=\dfrac{1}{A_1^{n-2}}\eta$.

If $\eta=0$, then $\eta'=0$. If $\eta\neq0$, then choosing $A_1$ such that $A_1^{n-2}=\eta$, we conclude
 $\eta'=1$. Thus the algebras $R_1(\alpha)$, for $\alpha\in\{0, 1\}$, are obtained.

\textbf{Case 2.} Let $\alpha_1=0,\alpha_2\neq 0$. Then making
the following change of  basis
\[x'=x-\sum\limits_{i=2}^{n-1}\beta_{i+1}\, e_i\] we can assume that
$[x,e_1]=\beta_2\, e_2$.

From the identity \[ \big[x,[x,e_1]\big]=\big[[x,x],e_1\big]-\big[[x,e_1],x\big] \] we derive
\[0=\sum\limits_{i=3}^n\delta_{i-1}\, e_i-\beta_2[e_2,x]=
\sum\limits_{i=3}^n\delta_{i-1}\, e_i-
\beta_2 \,\Big(\sum\limits_{i=2}^{n-1}\alpha_i\, e_i+\beta \, e_n\Big),\]
consequently, $\beta_2=0, \ \delta_i=0, \ 2\leq i\leq n-1$.

Making the change of basis
\[x'=x-\frac{\delta}{\alpha_2}\, e_n\]
we can assume that $[x, x]=0$.

Summarizing, we obtain the following table of multiplication of
the algebra in this case:
\[\left\{\begin{aligned}
{} [e_1,e_1] & =e_3, & [e_i,e_1] & =e_{i+1}, && 2\leq i\leq n-1,\\
 [e_1,x]& =\sum\limits_{i=2}^n\alpha_i\, e_i,  & [e_2,x]& =\sum\limits_{i=2}^{n-1}\alpha_i\, e_i+\beta\, e_n,&& \\
 [e_i,x]&=\sum\limits_{j=i}^n\alpha_{j-i+2}\, e_j,  & &  && 3\leq i\leq n\,.
\end{aligned}\right.\]

Now we shall study  the behaviour of the parameters in this family
of algebras under the general change of basis in the form \eqref{E:gchbasis}.

Then the equalities
\[0=[x',e_1']=\Big[\sum\limits_{i=1}^nB_ie_i+B_{n+1}x,A_1e_1\Big]=
A_1 \,\Big((B_1+B_2)e_3+\sum\limits_{i=4}^nB_{i-1}e_i\Big)\] imply $B_1=-B_2,
\ B_i=0, \ 3\leq i\leq n-1$.

Now we shall express the product $[e_1',x']$ as a linear
combination of the basis $\{e_1, e_2, \dots, e_n, x\}$, namely:
\begin{align*}
[e_1',x'] & =  \Big[\sum\limits_{i=1}^nA_ie_i,B_1e_1+B_{n+1}x\Big]\\
& {} =  B_1 \Big ((A_1+A_2)e_3+\sum\limits_{i=4}^nA_{i-1}e_i\Big) \\
& {} \quad  + B_{n+1} \, \bigg(A_1\sum\limits_{i=2}^n\alpha_ie_i+A_2 \,\Big(\sum\limits_{i=2}^{n-1}\alpha_ie_i+\beta
e_n\Big)+\sum\limits_{i=3}^nA_i\sum\limits_{j=i}^n\alpha_{j-i+2}e_j\bigg)\\
& {} =  B_1(A_1+A_2)e_3+\sum\limits_{i=4}^nB_1A_{i-1}e_i \\
& {} \quad + B_{n+1}A_1\sum\limits_{i=2}^n\alpha_ie_i+B_{n+1}A_2\sum\limits_{i=2}^{n-1}\alpha_ie_i+B_{n+1}A_2\beta
e_n+ B_{n+1}\sum\limits_{i=3}^n\sum\limits_{j=3}^iA_j\alpha_{i-j+2}e_i\\
& {} = B_{n+1}(A_1+A_2)\alpha_2\,e_2+\Big((A_1+A_2)(B_1+B_{n+1}\alpha_3)
 + B_{n+1}A_3\alpha_2\Big)\,e_3 \\
 & {} \quad + \sum\limits_{i=4}^{n-1}\, \Big(B_1A_{i-1}+B_{n+1}(A_1+A_2)\alpha_i
 + \sum\limits_{j=3}^iB_{n+1}A_j\alpha_{i-j+2}\Big)\,e_i \\
& {} \quad  + \bigg(B_1A_{n-1}+ B_{n+1}\Big(A_1\alpha_n+A_2\beta+\sum\limits_{i=3}^nA_i\alpha_{n-i+2}\Big)\bigg)\,e_n \,.
\end{align*}

On the other hand
\begin{align*}
[e_1',x'] & = \sum\limits_{i=2}^n\alpha_i'e_i'=
\sum\limits_{i=2}^n\alpha_i'A_1^{i-2} \, \Big((A_1+A_2)e_i+\sum\limits_{j=i+1}^nA_{j-i+2}e_j\Big) \\
& {} =  \sum\limits_{i=2}^n\alpha_i'A_1^{i-2}(A_1+A_2)e_i+
\sum\limits_{i=3}^n\sum\limits_{j=3}^iA_1^{i-j}A_j\alpha_{i-j+2}'e_i \\
& {} =  \alpha_2'(A_1+A_2)e_2+\big(A_1(A_1+A_2)\alpha_3'+A_3\alpha_2'\big)e_3+
\sum\limits_{i=4}^n\, \Big(\alpha_i'A_1^{i-2}(A_1+A_2)+
\sum\limits_{j=3}^iA_1^{i-j}A_j\alpha_{i-j+2}'\Big)e_i \,.
\end{align*}

Comparing coefficients at the basic elements in both combinations,
we obtain the following relations:
\begin{align*}
\alpha_2'(A_1+A_2)& =  B_{n+1}(A_1+A_2)\alpha_2,\\
A_1(A_1+A_2)\alpha_3'+A_3\alpha_2'& =(A_1+A_2)(B_1+B_{n+1}\alpha_3)+B_{n+1}A_3\alpha_2,\\
\alpha_i'A_1^{i-2}(A_1+A_2)+\sum\limits_{j=3}^iA_1^{i-j}A_j\alpha_{i-j+2}'&=
B_1A_{i-1}+B_{n+1}(A_1+A_2)\alpha_i+\sum\limits_{j=3}^iB_{n+1}A_j\alpha_{i-j+2}, \\
&  \qquad   \qquad \qquad   \qquad   \qquad   \qquad  \qquad   \qquad 4\leq i\leq n-1,\\
A_1^{n-2}\alpha_n'(A_1+A_2)+\sum\limits_{j=3}^nA_1^{n-j}A_j\alpha_{n-j+2}'& =B_1A_{n-1}+
B_{n+1}\Big(A_1\alpha_n+A_2\beta+\sum\limits_{i=3}^nA_i\alpha_{n-i+2}\Big)\,.
\end{align*}

The simplification of these relations implies the following
identities:

\[\alpha_2'=B_{n+1}\alpha_2,\quad \alpha_3'=\frac{B_1+\alpha_3B_{n+1}}{A_1},\quad \alpha_i'=\frac{B_{n+1}\alpha_i}{A_1^{i-2}}, \ 4\leq i\leq n-1,\quad \alpha_n'=\frac{(\alpha_nA_1+\beta A_2)B_{n+1}}{A_1^{n-2}(A_1+A_2)}.\]

Analogously, considering the product $[e_2',x']$, we get the
relation:
\[\beta'=\frac{\beta B_{n+1}}{A_1^{n-2}},\] and \[[x',x']=-\frac{(\beta B_1-\alpha_nB_1-\alpha_2B_n)B_{n+1}}{A_1^{n-2}(A_1+A_2)}e_n.\]

 Since $[x',x']=0$, then $B_n=\dfrac{\beta
B_1-\alpha_nB_1}{\alpha_2 }$.

 Setting $B_{n+1}=1/\alpha_2$ and
$B_1=-\alpha_3/\alpha_2$, then we derive that $\alpha_2'=1, \
\alpha_3'=0$.

 If $\beta=0$ and $\alpha_n=0$, then $\beta'=0$ and we
obtain the algebra $R_2(\alpha_4, \dots, \alpha_{n-1},0)$.

If $\beta=0$ and $\alpha_n\neq0$, then putting
$A_2=\dfrac{\alpha_n-\alpha_2A_1^{n-2}}{\alpha_2A_1^{n-3}}$, we have
 $\alpha_n'=1$ and so we obtain the algebra $R_2(\alpha_4, \dots, \alpha_{n-1},1)$.

 If $\beta\neq0$, then choosing
\[A_1=\sqrt[\uproot{4} n-2]{\dfrac{\beta}{\alpha_2}},\qquad A_2=-\dfrac{A_1\alpha_n}{\beta} \,, \]
we obtain $\beta'=1, \ \alpha_n'=0$ and the algebra $R_3(\alpha_4,
\dots, \alpha_{n-1})$.
\end{proof}

Now we shall consider the case when the dimension of a solvable
Leibniz algebra with nilradical $F_n^1$ is equal to $n+2$.

\begin{thm} It does not exist any $(n+2)$-dimensional solvable Leibniz algebra with nilradical $F_n^1$.
\end{thm}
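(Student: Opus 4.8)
The plan is to argue by contradiction: suppose $R$ is an $(n+2)$-dimensional solvable Leibniz algebra with nilradical $F_n^1$, and let $\{e_1,\dots,e_n\}$ be the adapted basis of $F_n^1$ from Theorem \ref{thm28} together with a complementary subspace $Q = \langle x, y\rangle$. By Proposition \ref{prop31}, the algebra $F_n^1$ has exactly two nil-independent outer derivations, so $\dim Q = 2$ is the maximal possible, and we may choose $x, y$ so that the restrictions $\mathcal{R}_{x|_{F_n^1}}$ and $\mathcal{R}_{y|_{F_n^1}}$ are non-nilpotent derivations whose semisimple parts are nil-independent. The key structural constraint is that, as in Case 1 of Theorem \ref{thm33}, after a change of basis we may normalize one of these derivations — say $\mathcal{R}_{x|_{F_n^1}}$ — to act diagonally with $[e_1,x] = -e_1$, $[e_i,x] = -(i-1)e_i$ for $3 \le i \le n$ (and $[e_2,x] = -e_2 + \eta e_n$ for some $\eta \in \{0,1\}$), which already fixes most of the multiplication of $R$ involving $x$.

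The next step is to pin down $\mathcal{R}_{y|_{F_n^1}}$. Since $[x,y] \in R^2 \subseteq F_n^1$ and $[y,x] \in F_n^1$, and since we may replace $y$ by $y + (\text{element of } F_n^1) + \lambda x$ without leaving the complementary subspace, we can use nil-independence to force $\mathcal{R}_{y|_{F_n^1}}$ into a convenient reduced form, again drawing on Proposition \ref{prop31}; generically one normalizes $[e_1,y] = 0$ (or a similarly degenerate eigenvalue pattern) so that the two operators together still generate a $2$-dimensional space of non-nilpotent derivations. I would then impose the Leibniz identity systematically on triples drawn from $\{e_1, e_2, x, y\}$ — in particular the identities built from $\{e_1, x, y\}$, $\{e_1, y, x\}$, $\{y, x, e_1\}$, $\{x, y, x\}$ and their analogues — to extract relations among the structure constants $[x,y], [y,x], [x,x], [y,y]$ and the coefficients of $\mathcal{R}_{y|_{F_n^1}}$. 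One also uses that $[e_i,y]$ for $i \ge 3$ is determined by $[e_2,y]$ and $[e_1,y]$ through the products $[e_i, e_1] = e_{i+1}$ and $[e_1,e_1] = e_3$, exactly as in the derivation computation of Proposition \ref{prop31}.

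The contradiction should emerge from an incompatibility between the diagonal action of $x$ (with eigenvalues $-1, -1, -2, \dots, -(n-1)$ on the graded pieces) and the constraints the Leibniz identity places on $y$: one finds that $\mathcal{R}_{y|_{F_n^1}}$ is forced to be a scalar multiple of $\mathcal{R}_{x|_{F_n^1}}$ modulo inner and nilpotent derivations, contradicting nil-independence. Equivalently, one shows that any derivation of $F_n^1$ commuting with the appropriate constraints and giving a well-defined solvable extension must have its diagonal part proportional to that of $\mathcal{R}_{x|_{F_n^1}}$, so $\langle x, y\rangle$ cannot have dimension $2$. The main obstacle I anticipate is bookkeeping: the coefficient $\alpha_1$ (the $e_1$-eigenvalue, which governs whether the derivation is ``generic'' as in Case 1 or ``special'' as in Case 2) forces a case split, and in the $\alpha_1 = 0$ branch the structure of admissible derivations is looser, so ruling out a second nil-independent direction there requires carefully tracking the $e_n$-component relations (the analogues of the $\gamma_n = 0$ and $\delta_i = 0$ deductions in Theorem \ref{thm33}). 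Once both cases collapse, the non-existence follows, and the analogous arguments dispatch the statement.
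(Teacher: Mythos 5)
Your overall architecture is the same as the paper's: use Proposition \ref{prop31} together with nil-independence to normalize the two operators $\mathcal{R}_{x|_{F_n^1}}$, $\mathcal{R}_{y|_{F_n^1}}$, then impose the Leibniz identity on triples drawn from $\{e_1,e_2,x,y\}$ and conclude that a second nil-independent direction cannot exist. However, the decisive step is only predicted, never carried out, and two of your concrete suggestions point away from where the contradiction actually lies. First, the proposed normalization $[e_1,y]=0$ is impossible: by Proposition \ref{prop31} the diagonal of a derivation of $F_n^1$ is $(\lambda_1,\lambda_1+\lambda_2,2\lambda_1+\lambda_2,\dots,(n-1)\lambda_1+\lambda_2)$, so annihilating both the $e_1$- and $e_2$-components of $d(e_1)$ makes the derivation nilpotent. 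Nil-independence forces the pairs $(\alpha_1,\alpha_2)$ for $x$ and $(\lambda_1,\lambda_2)$ for $y$ to be linearly independent, so one normalizes them to $(1,0)$ and $(0,1)$; in particular the $e_2$-coefficient of $[e_1,y]$ is pinned to $1$ and cannot be removed --- and that coefficient is precisely what the contradiction lives on. For the same reason your anticipated ``$\alpha_1=0$ branch'' does not occur: two derivations both with vanishing $e_1$-eigenvalue can never be nil-independent, so only the second operator has the degenerate diagonal $(0,1,1,\dots,1)$, and it is handled not by re-running Case 2 of Theorem \ref{thm33} but by direct Leibniz computations on $[y,e_1]$, $[y,e_2]$, $[y,y]$, $[x,y]$.

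Concretely, the paper first shows (via the identities for $\{e_1,y,e_1\}$, $\{y,e_1,e_2\}$, $\{e_1,y,e_2\}$, $\{y,y,e_1\}$, after absorbing inner terms into $y$) that $[y,e_1]=[y,e_2]=0$. Then the single identity $\big[x,[y,e_1]\big]=\big[[x,y],e_1\big]-\big[[x,e_1],y\big]$ reads $0=\big[[x,y],e_1\big]+[e_1,y]$, and since the image of right multiplication by $e_1$ is contained in $\langle e_3,\dots,e_n\rangle$ while $[e_1,y]=e_2+\sum_{i\geq 3}\beta_i e_i$ has $e_2$-coefficient $1$, comparing $e_2$-coefficients gives $0=1$. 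Your proposal instead expects the obstruction to appear in ``$e_n$-component relations'' and asserts that $\mathcal{R}_{y}$ ``is forced to be a scalar multiple of $\mathcal{R}_{x}$ modulo inner and nilpotent derivations''; that conclusion is indeed equivalent to what happens, but you give no derivation of it, and the bookkeeping you propose (killing $[e_1,y]$, watching $e_n$-coefficients) would not produce it. As it stands the proof is a plausible plan with the right skeleton, but the heart of the argument --- the explicit chain of Leibniz identities and the $e_2$-coefficient clash --- is missing.
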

\begin{proof} From the conditions of the theorem, we have the  existence of a basis $\{e_1, e_2, \dots, e_n, x, y\}$
such that the table of multiplication of $F_n^1$ remains. The
outer non-nilpotent derivations of $F_n^1$, denoted by $\mathcal{R}_{x_{\mid
F_n^1}}$ and $\mathcal{R}_{y_{\mid F_n^1}}$, are  of the form given in
Proposition \ref{prop31}, with the set of entries $\{\alpha_i,
\gamma\}$ and $\{\beta_i, \delta\}$, respectively, where
$[e_i,x]=\mathcal{R}_{x_{\mid F_n^1}}(e_i)$ and $[e_i,y]=\mathcal{R}_{y_{\mid
F_n^1}}(e_i)$.

Taking the following change of basis:
\begin{equation}\label{E:chbasis}
 x'=\frac{\beta_2}{\alpha_1\beta_2-\alpha_2\beta_1}x-
\frac{\alpha_2}{\alpha_1\beta_2-\alpha_2\beta_1}y,\quad \quad y'=-\frac{\beta_1}{\alpha_1\beta_2-\alpha_2\beta_1}x+
\frac{\alpha_1}{\alpha_1\beta_2-\alpha_2\beta_1}y,
\end{equation}
we may assume that $\alpha_1=\beta_2=1$ and $\alpha_2=\beta_1=0$.

Therefore we have the products
\begin{align*}
 [e_1,x]&=e_1+\sum\limits_{i=3}^n\alpha_ie_i, &
[e_2,x]&=e_2+\sum\limits_{i=3}^{n-1}\alpha_ie_i+\gamma e_n, &
[e_i,x]&=(i-1)e_i+\sum\limits_{j=i+1}^n\alpha_{j-i+2}e_j, && 3\leq
i\leq n,\\
[e_1,y]&=e_2+\sum\limits_{i=3}^n\beta_ie_i, &
[e_2,y]&=e_2+\sum\limits_{i=3}^{n-1}\beta_ie_i+\delta e_n, &
[e_i,y]&=e_i+\sum\limits_{j=i+1}^n\beta_{j-i+2}e_j, && 3\leq i\leq
n, \end{align*}

Applying similar arguments as in Case 1 of Theorem
\ref{thm33} and taking into account that the products $[e_1, y], \
[e_2, y], \ [e_i, y]$ will not be changed under the bases
transformations which were used there, we obtain the products:
\[[e_1,x]=e_1, \quad [e_2,x]=e_2+\gamma e_n, \quad  [e_i,x]=(i-1)e_i, \ \ 3\leq i\leq n, \qquad  [x,e_1]=-e_1 \,.\]

Let us introduce the notations:
\[[y,e_1]=\sum\limits_{i=1}^n\eta_ie_i, \quad [y,e_2]=\sum\limits_{i=1}^n\theta_ie_i, \quad [y,y]=\sum\limits_{i=1}^n\tau_ie_i, \quad [x,y]=\sum\limits_{i=1}^n\sigma_ie_i, \quad [y,x]=\sum\limits_{i=1}^n\rho_ie_i.\]

From the Leibniz identity
\[ \big[e_1,[y,e_1]\big]=\big[[e_1,y],e_1\big]-\big[[e_1,e_1],y\big] \] we get $\eta_1=0$.

Note that we can assume $[y,e_1]=\eta_2e_2$ (by changing $y'=y-\sum\limits_{i=2}^{n-1}\eta_{i+1}e_i$).

Due to \[ \big[y,[e_1,e_2]\big]=\big[[y,e_1],e_2\big]-\big[[y,e_2],e_1\big] \] we obtain
$\theta_2=-\theta_1, \ \theta_i=0,\  3\leq i\leq n-1$.

Since $\big[e_1,[y,e_2]\big]=\big[[e_1,y],e_2\big]-\big[[e_1,e_2],y\big]$, then we have
$\theta_1=\theta_2=0$. Moreover,  the Leibniz identity
$\big[y_1,[y,e_2]\big]=\big[[y,y],e_2\big]-\big[[y,e_2],y\big]$ implies that $\theta_n=0$,
i.e., $[y,e_2]=0$.

From the following chain of equalities
\begin{align*}
0&{}={}\eta_2[y,e_2]=[y,\eta_2e_2]=\big[y,[y,e_1]\big]=\big[[y,y],e_1\big]-\big[[y,e_1],y\big]\\
&{}={}(\tau_1+\tau_2)e_3+
\sum\limits_{i=4}^n\tau_{i-1}e_i-\eta_2[e_2,y]=(\tau_1+\tau_2)e_3+
\sum\limits_{i=4}^n\tau_{i-1}e_i-\eta_2\Big(e_2+\sum\limits_{i=3}^{n-1}\beta_ie_i+\delta
e_n\Big)
\end{align*}
 we derive that \[\eta_2=0, \qquad \tau_2=-\tau_1, \qquad  \tau_i=0, \quad
3\leq i\leq n-1.\]

Therefore, we have $[y,e_1]=0$ and $[y,y]=\tau_1e_1-\tau_1e_2+ \tau_ne_n$.

Consider the Leibniz identity

\[ \big[x,[y,e_1]\big]=\big[[x,y],e_1\big]-\big[[x,e_1],y\big] \] then we get
\[-e_2-\sum\limits_{i=3}^n\beta_ie_i=
(\sigma_1+\sigma_2)e_3+\sum\limits_{i=3}^{n-1}\sigma_{i}e_{i+1}.\]

Thus, we have a contradiction with the assumption of the existence
of an algebra under the conditions  of the  theorem.
\end{proof}

\subsection{Solvable Leibniz algebras with nilradical $F_n^2$}

\

In this section we describe solvable Leibniz algebras with
nilradical $F_n^2$, i.e. solvable Leibniz algebras $R$ which
decompose in the form $R=F_n^2 \oplus Q$.

\begin{prop} \label{prop5.5} An arbitrary derivation of the algebra $F_n^2$ has the following matrix form:
 \[
D=\begin{pmatrix}
\alpha_1& \alpha_2&\alpha_3&\alpha_4&\dots&\alpha_{n-1}&\alpha_n\\
0& \beta &0&0&\dots&0& \gamma\\
0& 0&2\alpha_1&\alpha_3&\dots&\alpha_{n-2}& \alpha_{n-1}\\
0& 0&0&3\alpha_1&\dots&\alpha_{n-3}&\alpha_{n-2}\\
\vdots&\vdots&\vdots&\vdots&\dots&\vdots&\vdots\\
0&0&0&0&\dots&0&(n-1)\alpha_1
\end{pmatrix}.
\]
\end{prop}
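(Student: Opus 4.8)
The plan is to compute directly with the defining derivation condition $d([x,y])=[d(x),y]+[x,d(y)]$, using the two generators that matter for $F_n^2$. Recall that in $F_n^2$ the nonzero products are $[e_1,e_1]=e_3$ and $[e_i,e_1]=e_{i+1}$ for $3\leq i\leq n-1$, so in particular $e_2$ is special: it never appears on the left of a nonzero product except that $[e_2,e_1]=0$ (the index range starts at $3$), and $e_2\in\Ann_r$ up to the generated ideal. First I would write $d(e_1)=\sum_{i=1}^n\alpha_i e_i$ and $d(e_2)=\sum_{i=1}^n\beta_i e_i$, and extract constraints from the low-degree identities: from $0=d([e_2,e_1])=[d(e_2),e_1]+[e_2,d(e_1)]$ one reads off a relation forcing $\beta_1=0$ and pinning the tail of $d(e_2)$; expanding $[d(e_2),e_1]=\beta_1 e_3 + \sum_{i\geq 3}\beta_i e_{i+1}$ (since only the $e_1$-th slot acts) and $[e_2,d(e_1)]=0$ gives $\beta_1=0$ and $\beta_i=0$ for $3\leq i\leq n-1$. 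This already produces the shape of the second row: only $\beta_2=:\beta$ and $\beta_n=:\gamma$ survive, matching the matrix.

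Next I would pin down $d(e_3)$ in two ways, exactly as in the proof of Proposition~\ref{prop31}. On one hand $d(e_3)=d([e_1,e_1])=[d(e_1),e_1]+[e_1,d(e_1)]$; using $[e_1,e_1]=e_3$ and $[e_i,e_1]=e_{i+1}$ for $i\geq 3$ this equals $2\alpha_1 e_3+\sum_{i=3}^{n-1}\alpha_i e_{i+1}$ plus a possible $e_3$ contribution from $[e_1, \alpha_2 e_2]$, which vanishes since $[e_1,e_2]=0$. Hence $d(e_3)=2\alpha_1 e_3+\sum_{i=3}^{n-1}\alpha_i e_{i+1}$, which is the third row. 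Then I proceed by induction on $i$ using $[e_{i+1},x]$-type identities — precisely $d(e_{i+1})=d([e_i,e_1])=[d(e_i),e_1]+[e_i,d(e_1)]$ — where $[e_i,d(e_1)]=[e_i,\alpha_1 e_1]=\alpha_1 e_{i+1}$ and $[d(e_i),e_1]$ shifts the already-known expression for $d(e_i)$ up by one index. This yields $d(e_i)=i\alpha_1 e_i+\sum_{j=i+1}^n \alpha_{j-i+2} e_j$ for $3\leq i\leq n$, which fills in rows $4$ through $n$ and shows $d(e_n)=(n-1)\alpha_1 e_n$.

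The only genuinely new feature compared to Proposition~\ref{prop31} is the decoupling of the $e_2$-row: here $\beta_2=\beta$ is a free parameter unrelated to $\alpha_1,\alpha_2$, and there is a free $\gamma$ in the $(2,n)$ position, because $e_2$ is not a product of generators in $F_n^2$ (its defining relation starts the chain at $e_3=[e_1,e_1]$, with $e_2$ isolated). I would double-check consistency by verifying the remaining Leibniz-identity instances — those involving $e_2$ on either side, e.g. $d([e_i,e_2])$ and $d([e_2,e_i])$ for $i\geq 2$ — all of which are automatically satisfied since both sides vanish or reduce to the relations already derived; this confirms that $\beta$ and $\gamma$ are unconstrained and that no further entries appear. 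The main (minor) obstacle is simply bookkeeping the index shifts in the induction and making sure the $e_2$ column entry of $d(e_1)$, namely $\alpha_2$, does not leak into the lower rows — it does not, precisely because $[e_1,e_2]=[e_2,e_2]=0$ in $F_n^2$. Assembling the rows gives exactly the stated matrix $D$.
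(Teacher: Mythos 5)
Your proposal is correct and follows essentially the same route as the paper, which simply carries out the Proposition~\ref{prop31} computation for $F_n^2$: the identity $0=d([e_2,e_1])=[d(e_2),e_1]+[e_2,d(e_1)]$ kills $\beta_1$ and $\beta_3,\dots,\beta_{n-1}$ (leaving only $\beta,\gamma$ in the second row), and the induction along $[e_i,e_1]=e_{i+1}$ determines the remaining rows, with $\alpha_2$ absent from them precisely because $[e_2,e_1]=0$ here. Note only the small index slip when you restate the induction's outcome: the diagonal coefficient should read $(i-1)\alpha_1$ rather than $i\alpha_1$, as your own conclusion $d(e_n)=(n-1)\alpha_1 e_n$ and the stated matrix confirm.
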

\begin{proof} The proof follows by straightforward calculations in a similar way as the proof of Proposition \ref{prop31}.
\end{proof}

\begin{rem}
 It is an easy task  to check  that the number of nil-independent
derivations of the algebra  $F_n^2$ is equal to 2.
\end{rem}

\begin{cor} The dimension of a solvable Leibniz algebra with nilradical $F_n^2$ is either $n+1$ or $n+2$.
\end{cor}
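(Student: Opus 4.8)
The statement to prove is the Corollary: the dimension of a solvable Leibniz algebra $R$ with nilradical $F_n^2$ is either $n+1$ or $n+2$. The plan is to reuse exactly the general mechanism described in the Preliminaries (the paragraph following Proposition~\ref{prop2.14}): if $N$ is the nilradical and $Q$ a vector-space complement of $N$ in $R$, then for every $x\in Q$ the operator $\mathcal{R}_{x|_N}$ is a \emph{non-nilpotent} derivation of $N$, and any basis $\{x_1,\dots,x_m\}$ of $Q$ gives rise to a family of nil-independent derivations $\mathcal{R}_{x_1|_N},\dots,\mathcal{R}_{x_m|_N}$ of $N$. Hence $\dim Q$ is bounded above by the maximal number of nil-independent derivations of $N=F_n^2$, and by the Remark just stated this number equals $2$. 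Consequently $\dim R=\dim N+\dim Q\le n+2$, while $\dim R\ge n+1$ since $R$ is assumed non-nilpotent (so $Q\neq 0$). This gives the two allowed values.

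Concretely, I would argue as follows. First, recall from the cited results (\cite{AyOm1}) that for a solvable Leibniz algebra $R$ one has $R^2\subseteq N$, so $\langle x\rangle + N$ is an ideal for any $x\in Q$; if $\mathcal{R}_{x|_N}$ were nilpotent then Engel's theorem for Leibniz algebras would force $\langle x\rangle+N$ to be a nilpotent ideal strictly larger than $N$, contradicting maximality of the nilradical. Thus every nonzero element of $Q$ induces a non-nilpotent derivation of $F_n^2$, and more precisely, for nonzero scalars $\alpha_i$ the combination $\sum \alpha_i \mathcal{R}_{x_i|_N}$ is non-nilpotent, i.e. the $\mathcal{R}_{x_i|_N}$ are nil-independent. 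Second, invoke the Remark: the maximal number of nil-independent derivations of $F_n^2$ is $2$ — this in turn rests on the explicit description of $\mathrm{Der}(F_n^2)$ in Proposition~\ref{prop5.5}, from which one reads off that modulo the nilpotent (inner-type) part the derivation algebra contributes only the two independent diagonal-type parameters $\alpha_1$ and $\beta$. Therefore $\dim Q\le 2$.

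The only point requiring a word of care is the lower bound: a \emph{solvable} algebra could a priori be nilpotent, in which case $R=N$ and $\dim R=n$. But the convention stated in the introduction — "if it is not noticed, we shall consider non-nilpotent solvable algebras" — excludes this, so $Q\neq 0$ and $\dim R\ge n+1$. Combining, $\dim R\in\{n+1,n+2\}$.

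I do not expect any real obstacle here: the corollary is a direct packaging of the Remark together with the general nilradical/nil-independence bound already established in Section~\ref{S:Prel}. The only substantive input is the count "$2$" of nil-independent derivations of $F_n^2$, which is itself a routine consequence of the matrix form in Proposition~\ref{prop5.5} (one checks that the derivations with $\alpha_1=\beta=0$ form a nilpotent subalgebra, so any nil-independent set has at most two elements, and exhibiting two such — e.g. the derivations with $(\alpha_1,\beta)=(1,0)$ and $(0,1)$ and all other entries zero — shows the bound is attained). Hence the proof is essentially one paragraph citing the earlier material.
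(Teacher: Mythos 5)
Your proposal is correct and follows essentially the same route as the paper: the corollary is obtained by combining the general nil-independence bound on $\dim Q$ from the Preliminaries (the argument after Proposition~2.14) with the Remark that $F_n^2$ admits exactly two nil-independent derivations, read off from Proposition~5.5, plus the standing convention that the solvable algebras considered are non-nilpotent. No gaps.
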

\begin{thm} \label{thm5.7} An $(n+1)$-dimensional  solvable Leibniz algebra with nilradical $F_n^2$
is isomorphic to one of the following pairwise non-isomorphic algebras:
\[R_1(\alpha):  \left\{\begin{aligned}
{} [e_1,e_1] & =e_3, & [e_i,e_1]& =e_{i+1}, && 3\leq i\leq n-1,\\
[e_1,x]& =-e_1,  & [e_i,x]& =-(i-1)\, e_i, && 3\leq i\leq n, \\
 [x,e_1]& =e_1, & [x,x] & =\alpha\, e_2, && \alpha\in\{0,1\}\,.
\end{aligned}\right.\]
\[ R_2(\alpha): \left\{\begin{aligned}
{} [e_1,e_1]& =e_3, & [e_i,e_1]& =e_{i+1}, && 3\leq i\leq n-1,\\
[e_1,x]&=-e_1,& [e_i,x]&=-(i-1)\, e_i, && 3\leq i\leq n, \\
 [x,e_1]& =e_1,  &[e_2,x]&=\alpha\, e_2, && \alpha\neq 0 \,.
\end{aligned}\right.\]
\[ R_3: \left\{\begin{aligned}
{} [e_1,e_1]& =e_3, & [e_i,e_1]& =e_{i+1}, && 3\leq i\leq n-1,\\
[e_1,x]&=-e_1,& [e_i,x]&=-(i-1)\, e_i, && 3\leq i\leq n, \\
 [x,e_1]& =e_1,  &[e_2,x]&=(1-n)\, e_2+e_n\,.
\end{aligned}\right.\]
\[R_4(\alpha): \left\{\begin{aligned}
{} [e_1,e_1]& =e_3, & [e_i,e_1]&=e_{i+1}, && 3\leq i\leq n-1,\\
[e_1,x]&=-e_1,& [e_i,x]&=-(i-1)\, e_i, && 3\leq i\leq n,\\
 [x,e_1]&=e_1, & [e_2,x]& =-\alpha\, e_2, && \alpha\neq 1,\\
 [x,e_2]& =\alpha\, e_2 \,. &  & &&
\end{aligned}\right.\]
\[R_5(\alpha):\left\{\begin{aligned}
{} [e_1,e_1]& =e_3, && & [e_i,e_1]& =e_{i+1}, && 3\leq i\leq n-1,\\
[e_1,x]& =-e_1-\alpha\, e_2,&& \alpha\in\{0,1\},& [e_i,x]& =-(i-1)\, e_i, && 3\leq i\leq n,\\
[x,e_1]&=e_1+\alpha\, e_2, &&    & [e_2,x]& =-e_2, & &\\
 [x,e_2]& =e_2\,. & & & &  & &
\end{aligned}\right.\]
\[R_6(\alpha_3,\alpha_4, \dots,\alpha_n,\lambda, \delta):
\left\{\begin{aligned}
{} [e_1,e_1]& =e_3, & [e_i,e_1]& =e_{i+1}, && 3\leq i\leq n-1,\\
 [e_1,x]&=\sum\limits_{i=3}^n\alpha_i\, e_i, & [e_i,x]&=\sum\limits_{j=i+1}^n\alpha_{j-i+2}\, e_j, && 3\leq i\leq n-1,\\
  [x,x]& =\lambda\, e_n, & [e_2,x]&=e_2, &&  \\
 [x,e_2]&=\delta\, e_2, & & & &  \delta\in\{0, -1\} \,.
\end{aligned}\right.\]
In the algebra $R_6(\alpha_3, \alpha_4, \dots,\alpha_n,\lambda,
\delta)$ the first non vanishing parameter
$\{\alpha_3,\alpha_4,\dots,\alpha_n,\lambda\}$ can be scaled to 1.
\end{thm}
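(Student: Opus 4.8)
The strategy follows the same template already used for nilradical $F_n^1$ in Theorem \ref{thm33}: start from the general form of an outer non-nilpotent derivation $\mathcal{R}_{x_{\mid F_n^2}}$ given by Proposition \ref{prop5.5}, introduce unknown coefficients for the remaining products $[x,e_1],[x,e_2],[x,x]$, exploit the Leibniz identity together with the known multiplication of $F_n^2$ to constrain those coefficients, discuss cases according to the values of the diagonal parameters $(\alpha_1,\beta)$ of the derivation, and finally reduce the surviving parameters by a carefully chosen change of basis. First I would write
\[
[e_1,x]=\sum_{i=1}^n\alpha_i e_i,\qquad [e_2,x]=\beta e_2+\gamma e_n,\qquad [e_i,x]=(i-1)\alpha_1 e_i+\sum_{j=i+1}^n\alpha_{j-i+2}e_j,\ 3\le i\le n,
\]
and $[x,e_1]=\sum\beta_ie_i$, $[x,e_2]=\sum\gamma_ie_i$, $[x,x]=\sum\delta_ie_i$, and then grind the Leibniz identity on triples such as $\{x,e_2,e_1\}$, $\{e_1,x,e_1\}$, $\{e_1,x,e_2\}$, $\{x,x,e_2\}$, $\{x,x,e_1\}$ to kill most of the $\gamma_i,\delta_i$ and relate $\beta_1$ to $\alpha_1$. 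As in the $F_n^1$ case one expects $[x,e_i]$ for $i\ge 3$ to be forced antisymmetric to $[e_i,x]$, and the products involving $e_2$ to remain comparatively free — this is exactly the source of the extra algebras $R_2,\dots,R_5$ compared with the $F_n^1$ situation, since in $F_n^2$ the element $e_2$ is not in the derived subalgebra and so $[x,e_2]$ need not vanish.

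The case analysis splits on whether $\alpha_1\neq 0$. When $\alpha_1\neq 0$ one normalizes $\alpha_1=-1$ by rescaling $x$, and then absorbs the off-diagonal entries $\alpha_3,\dots,\alpha_n$ of the derivation into a change of basis of the $e_i$'s (the substitution $e_i'=e_i+\sum A_{j-i+2}e_j$ with the $A$'s determined recursively, just as in \eqref{E:ind} and the computation right after it in the proof of Theorem \ref{thm33}); this reduces $[e_i,x]$ to $-(i-1)e_i$ for $i\ge 3$ and $[e_1,x]$ to $-e_1$ (possibly with a leftover $e_2$–term, which survives because $e_2$ is a genuine generator here). What is then left is a two–parameter-ish family governed by the behaviour of $[e_2,x]$, $[x,e_2]$ and $[x,x]$; exploiting the remaining freedom (scaling $e_2$, shifting $x$ by multiples of $e_2$ and $e_n$) one separates the branches $R_1(\alpha)$ (when $[e_2,x]=0$ and only $[x,x]=\alpha e_2$ survives), $R_2(\alpha)$ ($[e_2,x]=\alpha e_2$, $\alpha\neq 0$, no antisymmetric partner), $R_3$ (the exceptional value $\alpha=1-n$ where an $e_n$–term cannot be removed), $R_4(\alpha)$ and $R_5(\alpha)$ (where $[x,e_2]=\alpha e_2=-[e_2,x]$ holds, the degenerate $\alpha=1$ case being pulled out as $R_5$). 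When $\alpha_1=0$ the derivation has no $\alpha_1$–part, one shifts $x$ to make $[x,e_1]=0$ and $[x,x]$ lie in $\langle e_n\rangle$, the off-diagonal $\alpha_i$'s may no longer all be removed, and one lands in the family $R_6(\alpha_3,\dots,\alpha_n,\lambda,\delta)$; the constraint $\delta\in\{0,-1\}$ comes from the antisymmetry relation forced on $[x,e_2]$ versus $[e_2,x]=e_2$, and the first non-vanishing parameter among $\{\alpha_3,\dots,\alpha_n,\lambda\}$ is scaled to $1$ using the scaling $x\mapsto tx$, $e_i\mapsto t^{\,?}e_i$.

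The main obstacle is not any single identity but the bookkeeping in the normalization step and, more importantly, verifying that the six families are pairwise non-isomorphic and that no further reduction collapses two of them. For non-isomorphy I would extract invariants that survive arbitrary base change: the rank and Jordan form of $\mathcal{R}_{x_{\mid N}}$ (its eigenvalue on $e_2$, i.e. the coefficient in $[e_2,x]$, distinguishes $R_1,R_2,R_3$ from each other and from $R_4,R_5,R_6$), whether the symmetrized product $[x,e_2]+[e_2,x]$ vanishes (separating the "$R_1$-type" from the "$R_4/R_5$-type"), the dimension of $\langle[x,x],[x,e_1],[x,e_2]\rangle$ modulo $N^2$, and the value of $\delta$ in $R_6$. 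The truly delicate points are the two exceptional values — $\alpha=1-n$ giving $R_3$, and $\alpha=1$ giving $R_5$ — where the generic change of basis that would remove an $e_n$– or $e_2$–term degenerates because a denominator like $(n-1)+\alpha$ or $1-\alpha$ vanishes; these must be singled out explicitly. I would organize the write-up exactly as in Theorem \ref{thm33}: fix the form, run the Leibniz identities, normalize in Case 1 ($\alpha_1\neq 0$) producing $R_1$ through $R_5$, then Case 2 ($\alpha_1=0$) producing $R_6$, and close with the scaling remarks on the free parameters.
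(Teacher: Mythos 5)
Your proposal follows essentially the same route as the paper's proof: start from the derivation form of Proposition \ref{prop5.5}, introduce unknowns for $[x,e_1]$, $[x,e_2]$, $[x,x]$, constrain them by Leibniz identities, split on $\alpha_1\neq0$ versus $\alpha_1=0$ (the paper phrases the inner branching via $e_2\in\Ann_r(R)$ and $e_2\in\cent(R)$, which matches your $\delta$- and $[e_2,x]$-based branching), and single out exactly the degenerate values $\beta_2=1-n$ (giving $R_3$) and $\delta=1$ (giving $R_5$) where the generic normalization fails. The plan is sound and consistent with the paper's argument.
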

\begin{proof} Let $R$ be a solvable Leibniz algebra satisfying the conditions of the theorem, then
there exists a basis $\{e_1, e_2, \dots, e_n, x\}$, such that
$\{e_1, e_2, \dots, e_n\}$ is the standard basis of $F_n^2$, and
for non nilpotent outer derivations of the algebra $F_n^2$ we have
that $[e_i,x]=\mathcal{R}_{x_{\mid F_n^2}}(e_i), \ 1\leq i\leq n$,

Due to Proposition \ref{prop5.5} we can assume that
\[ [e_1,x]=\sum\limits_{i=1}^n\alpha_i\,e_i, \quad [e_2,x]=\beta_2\, e_2+\beta_n\, e_n, \quad
[e_i,x]=(i-1)\alpha_1\, e_i+\sum\limits_{j=i+1}^n\alpha_{j-i+2}\,e_j,\quad 3\leq i\leq n.\]

Let us introduce the following notations:
\[[x,e_1]=\sum\limits_{i=1}^n\gamma_i\, e_i, \qquad [x,e_2]=\sum\limits_{i=1}^n\delta_i\, e_i, \qquad
[x,x]=\sum\limits_{i=1}^n\lambda_i\, e_i.\]

Considering the Leibniz identity for the elements $\{e_1, x, x\},
\ \{e_1, x, e_1\}, \ \{x, e_2, e_1\}$ we obtain $\lambda_1=0, \
\gamma_1=-\alpha_1$ and $[x,e_2]=\delta_2e_2+\delta_ne_n$. By
setting $e_2'=\delta_2e_2+\delta_ne_n$ we can assume that
$[x,e_2]=\delta e_2$.

Now we distinguish the following possible cases:

\textbf{Case 1.} Let $\alpha_1\neq 0$. Then the following change of basis
\[x'=\frac{1}{\gamma_1}\, x, \quad e_1'=e_1+\frac{1}{\gamma_1}\sum\limits_{j=3}^n\gamma_{j}e_j, \quad e_2'=e_2,
\quad
e_i'=e_i+\frac{1}{\gamma_1}\sum\limits_{j=i+1}^n\gamma_{j-i+2}\,e_j, \  \ 3\leq i\leq n,\]
 implies that $[x',e_1']=e_1'+\gamma e_2'$
(where $\gamma=\frac{\gamma_2}{\gamma_1}$) and the rest of
products remains unchanging.

From the equalities:
\[e_1+\gamma(1+\delta)\, e_2=\big[x,[x,e_1]\big]=\big[[x,x],e_1\big]-\big[[x,e_1],x\big]=
\sum\limits_{i=4}^n\lambda_{i-1}\, e_i-\sum\limits_{i=1}^n\alpha_i\,e_i- \gamma\beta_2\, e_2-\gamma\beta_n\, e_n\]
we deduce that
\[\alpha_1=-1, \ \alpha_3=0, \ \alpha_2=-\gamma(1+\delta+\beta_2), \
\lambda_i=\alpha_{i+1}, \ \ 3\leq i\leq n-2,  \quad \text{and} \quad
\lambda_{n-1}=\alpha_n+\gamma\beta_n \,.\]
In addition, if we take the following change of basis:
\[e_1'=e_1+\sum\limits_{i=4}^nA_i\, e_i, \quad e_2'=e_2, \quad e_i'=e_i+\sum\limits_{j=i+2}^nA_{j-i+2}\, e_j, \ 3\leq i\leq n,
 \quad x'=\sum\limits_{i=3}^{n-1}A_{i+1}\, e_i+B\, e_n+x,\]
where $A_j=\dfrac{1}{2}\alpha_j, \ j=4,5, \quad  A_i=\dfrac{1}{i-2}\big(\alpha_i+\sum\limits_{j=4}^{i-2}A_j\alpha_{i-j+2}\big), \ \ 6\leq i \leq n$ and
$B=\dfrac{1}{n-1}\big(\lambda_n+\sum\limits_{j=4}^{n-1}A_j\alpha_{i-j+3}\big)$,
then we have
\[[e_1',x']=-e_1'+\alpha_2\, e_2', \ [e_2',x']=\beta_2\, e_2'+\beta_n\, e_n', \
[x',x']=\lambda_2\, e_2'+\gamma\beta_n\, e_{n-1}', \
[e_i',x']=-(i-1)\, e_i', \ 3\leq i\leq n.\]

Finally, we obtain the following table of multiplication of the
algebra $R$:
\[\left\{\begin{aligned}
{} [e_1,x]& =-e_1-\gamma(1+\delta+\beta_2)\, e_2, & [e_2,x] & =\beta_2\, e_2+\beta_n\, e_n, &&\\
[x,e_1]& =e_1+\gamma \, e_2,   & [e_i,x] &=-(i-1)\, e_i,  && 3\leq i\leq n,  \\
 [x,e_2]&=\delta\, e_2, & [x,x]& =\lambda_2\, e_2+\gamma\beta_n\, e_{n-1} \,. &&
\end{aligned}\right.\]

Considering the Leibniz identity for the elements $\{x, x, e_2\},
\ \{x, x, x\}, \ \{x, e_1, x\}$, we obtain:
\[\delta\beta_n=\delta(\delta+\beta_2)=\delta\lambda_2=\gamma\delta(\delta+\beta_2)=0.\]

Notice that if $e_2\in \Ann_r(R)$, then $\dim \Ann_r(R)=n-1$ and if
$e_2 \notin \Ann_r(R)$, then $\dim \Ann_r(R)=n-2$.

Now we analyze the following possible subcases:

\textbf{Case 1.1.} Let $e_2\in \Ann_r(R)$. Then $\delta=0$ and making the change
$e_1'=e_1+\gamma e_2$ we can assume that $[x,e_1]=e_1$.

In this case, we must consider two new subcases:

\textbf{Case 1.1.1.} Let $e_2\in \cent(R)$. Then $\dim
\cent(R)=1$ and $\beta_2=\beta_n=0$.  Then we have two options:
if $\lambda_2=0$, then we get the split algebra $R_1(0)$; if
$\lambda_2\neq 0$, then  we obtain the algebra $R_1(1)$ by scaling
the basis.

\textbf{Case 1.1.2.} Let $e_2 \notin \cent(R)$. Then $\dim
\cent(R)=0$ and $(\beta_2,\beta_n)\neq(0,0)$.

Let us take the following general change of basis:
\[e_1'=\sum\limits_{i=1}^nA_i\, e_i, \ \  e_2'=\sum\limits_{i=1}^nB_i\, e_i, \ \
e_i'=A_1^{i-2}\Big(A_1e_i+\sum\limits_{j=i+1}^nA_{j-i+2}\, e_i\Big), \
3\leq i\leq n, \ \ x'=\sum\limits_{i=1}^n C_i\, e_i+C_{n+1}\, x,\]
where $(A_1B_2-A_2B_1)C_{n+1}\neq0$.

From $0=[e_2',e_1']=[e_2',e_2']$, we obtain that $B_1=0,\ B_i=0,\
3\leq i\leq n-1$, i.e. $e_2'=B_2\, e_2+B_n\, e_n$ and $A_1B_2\neq 0$.

The equalities
\[e_1'=[x',e_1']=A_1 C_1\, e_3+\sum\limits_{i=4}^n A_1 C_{i-1}\, e_i+A_1 C_{n+1}\, e_1,\]
imply that
\[C_{n+1}=1,\qquad A_2=0, \qquad  A_3=A_1C_1, \qquad A_i=A_1C_{i-1}, \quad
4\leq i\leq n.\]

Similarly, from
\[B_2\beta_2'\, e_2+(B_n\beta_2'+\beta_n'A_1^{n-1})\, e_n=
\beta_2'\, e_2'+\beta_n'\, e_n'=[e_2',x']=B_2\beta_2 \,e_2
+\big(B_2\beta_n-(n-1)B_n\big)\, e_n,\] and
\begin{multline*}
\lambda_2'B_2\, e_2+\lambda_2'B_n\, e_n=\lambda_2'\, e_2'=[x',x'] =
(\lambda_2+C_2\beta_2)\, e_2 + (C_1^2-2C_3)\, e_3 \\
 {} + {} \sum\limits_{i=4}^{n-1}\big(C_1C_{i-1}-(i-1)C_i\big)\, e_i+
\big(C_1C_{n-1}-(n-1)C_n+C_2\beta_n\big)\, e_n
\end{multline*}
we obtain
$C_i=\dfrac{1}{(i-1)!}C_1^{i-1}, \ 3\leq i\leq n-1$ and
\[\beta_2'=\beta_2, \quad \beta_n'=\frac{B_2\beta_n-B_n(\beta_2+n-1)}{A_1^{n-1}}, \quad
\lambda_2'=\frac{\lambda_2+\beta_2C_2}{B_2}, \quad \lambda_2'B_n=C_1C_{n-1}-(n-1)C_n+C_2\beta_n.\]
Now we must distinguish two subcases:

 \textbf{Case 1.1.2.1.} Let
$\beta_2=1-n$. We put $C_2=-\frac{\lambda_2}{1-n}, \
C_n=\frac{C_1C_{n-1}+C_2\beta_n}{n-1}$, then we get $\lambda_2'=0$
and $\beta_n'=\frac{B_2\beta_n}{A_1^{n-1}}$.

If $\beta_n=0$, then we get the algebra $R_2(\alpha)$ for $\alpha=1-n$.

If $\beta_n\neq0$, then making $A_1=\sqrt[\uproot{2} n-1]{\beta_nB_2}$, we
obtain $\beta_n'=1$  and the algebra $R_3$.

\textbf{Case 1.1.2.2.} Let  $\beta_2\neq1-n$. Taking the change
$B_n=\frac{B_2\beta_n}{\beta_2+n-1}$, we obtain $\beta_n=0$. Since
$\beta_2\neq 0$, we set $C_2=-\frac{\lambda_2}{\beta_2}, \
C_n=\frac{C_1C_{n-1}+C_2\beta_n}{n-1}$ and we get $\lambda_2=0$,
i.e., the algebra $R_2(\alpha)$ is
obtained, for $\alpha\notin\{1-n,0\}$.

\textbf{Case 1.2.} Let  $e_2 \notin \Ann_r(R)$. Then
$\delta\neq0$ and $\beta_2=-\delta, \beta_n=\lambda_2=0$.

Let us consider the general change of basis in the following form:
\[e_1'=\sum\limits_{i=1}^nA_i\, e_i, \quad e_2'=\sum\limits_{i=1}^nB_i\, e_i, \quad
e_i'=A_1^{i-2}\big(A_1\, e_i+\sum\limits_{j=i+1}^nA_{j-i+2}\, e_j\big), \
3\leq i\leq n, \quad x'=\sum\limits_{i=1}^nC_i\, e_i+C_{n+1}\, x,\]
where $(A_1B_2-A_2B_1)C_{n+1}\neq0$.

Then from $0=[e_2',e_1']=[e_2',e_2']$, we derive that $B_1=0,\
B_i=0, \ 3\leq i\leq n-1$, i.e. $e_2'=B_2\, e_2+B_n\, e_n$ and
$A_1B_2\neq0$.

Similarly, from the equations:
\[e_1'+\gamma'\, e_2'=[x',e_1']=A_1C_{n+1}\, e_1+
C_{n+1}(A_1\gamma+A_2\delta)\, e_2+A_1C_1\,e_3
+\sum\limits_{i=4}^nA_1C_{i-1}\, e_i\] and
\[\delta'(B_2\, e_2+B_n\, e_n)=\delta'\, e_2'=[x',e_2']=B_2\delta\, e_2\]
we obtain
 \begin{align*}
 C_{n+1}&=1, & A_3 &=A_1C_1, &  A_i&=A_1C_{i-1}, \quad 4\leq i\leq n-1,  \\
\gamma'&=\frac{A_1\gamma+A_2(\delta-1)}{B_2}, & A_1C_{n-1}&=A_n+\gamma'B_n, & \delta'& =\delta, \qquad \qquad \delta'B_n =0\,.
 \end{align*}
Now we distinguish the following two subcases:

\textbf{Case 1.2.1.} Let  $\delta\neq 1$. Then by the
substitution  $A_2=-\dfrac{A_1\gamma}{\delta-1}, \ A_n=A_1C_{n-1}$
into the above conditions, we get $\gamma'=0$ and the algebra
$R_4(\alpha)$.

\textbf{Case 1.2.2.} Let $\delta=1$. Then $B_n=0$. In the case of
$\gamma=0$, we get $\gamma'=0$. In the case $\gamma\neq0$, by
putting $B_2=A_1\gamma$ and $A_n=A_1C_{n-1}-B_n$, we get
$\gamma'=1$. Thus, the algebras $R_5(\alpha), \ \alpha\in\{0,1\}$,
are obtained.

\textbf{Case 2.} Let $\alpha_1=0$. Then $\beta_2\neq0$ and by
replacing  $x$ by $x'=\dfrac{1}{\beta_2}\, x$, we can assume
$[e_2,x']=e_2+\beta_ne_n$.

Under these conditions, the table of multiplication of the
solvable algebra $R$ has the form:
\[\left\{\begin{aligned}
{} [e_1,x]& =\sum\limits_{i=2}^n\alpha_i\, e_i, & [e_2,x] & =e_2+\beta_n\, e_n, &&\\
[x,e_1]& =\sum\limits_{i=2}^n\gamma_i\, e_i,   & [e_i,x] &=\sum\limits_{j=i+1}^n\alpha_{j-i+2}\, e_j, && \ 3\leq i\leq n-1, \\
 [x,e_2]&=\delta\, e_2, & [x,x]& =\sum\limits_{i=2}^n\lambda_i\, e_i \,. &&
\end{aligned}\right.\]

Making the transformation $x'=x-\gamma_3\, e_1-\sum\limits_{i=3}^{n-1}\gamma_{i+1}\, e_i$, we can assume that
$[x,e_1]=\gamma\, e_2$.

Similarly as above, we obtain  the conditions:
\[\gamma(\delta+1)=\alpha_2\delta-\gamma=\beta_n\delta=\delta(\delta+1)=\lambda_2\delta=0.\]

Now we distinguish the following subcases depending on the
possible values of the parameter $\delta$:

\textbf{Case 2.1.} Let  $\delta\neq 0$. Then $\dim \Ann_r(R)=n-2$
and $\beta_n=\lambda_2=0,\ \delta=-1, \ \alpha_2=-\gamma$. By
means of the  change of the  basic element $e_1'=e_1+\gamma e_2$,
we can suppose that $[x',e_1]=0$.

Taking the general change of bases as in the above considered
cases, we derive the conditions for the parameters under the
following basis transformation:
\[\alpha_i'=\frac{\alpha_i}{A_1^{i-2}}, \quad  3\leq i\leq n, \quad \quad \lambda_n'=\frac{\lambda_n}{A_1^{n-1}}.\]
Consequently, we deduce the algebra $R_6(\alpha_3,\alpha_4,\dots,\alpha_n,\lambda, -1)$.

\textbf{Case 2.2.} Let  $\delta=0$. Then $\dim \Ann_r(R)=n-1$ and
$\gamma=0$. Taking the change of basis $e_2'=e_2+\beta_n\, e_n$, we
can assume that $[e_2,x]=e_2$ and by the change $x'=x-\lambda_2\,e_2$,
 we  can also suppose that $[x,x]=\lambda_n\, e_n$. Therefore,
we have the products
\[[e_1,x]=\sum\limits_{i=2}^n\alpha_i\, e_i, \quad [e_2,x]=e_2, \quad
[e_i,x]=\sum\limits_{j=i+1}^n\alpha_{j-i+2}\, e_j, \ 3\leq i\leq
n-1, \qquad [x,x]=\lambda_n\, e_n.\]
 Applying similar arguments to general transformation of
 bases, we have
\[\alpha_2'=0, \qquad \alpha_i'=\frac{\alpha_i}{A_1^{i-2}}, \ \  3\leq i\leq n, \qquad \lambda_n'=\frac{\lambda_n}{A_1^{n-1}}.\]
Thus, we obtain the algebra $R_6(\alpha_3,\alpha_4,\dots,\alpha_n,\lambda, 0)$.
\end{proof}

\begin{thm} \label{thm5.8} An arbitrary $(n+2)$-dimensional solvable Leibniz algebra with nilradical $F_n^2$
 is isomorphic to one of the following non isomorphic algebras:
\[ L_1: \left\{\begin{aligned}
{} [e_1,e_1] & =e_3, & [e_i,e_1] & =e_{i+1}, && 3\leq i\leq n-1,\\
 [e_1,x]& =e_1, & [x,e_1] & =-e_1,   && \\
 [e_2,y]& =-[y,e_2]=e_2, &[e_i,x]&=(i-1)e_i, && 3\leq i\leq n,
\end{aligned}\right.\]
\[ L_2: \left\{\begin{aligned}
{} [e_1,e_1] & =e_3, & [e_i,e_1] & =e_{i+1}, && 3\leq i\leq n-1,\\
 [e_1,x]& =e_1, & [x,e_1] & =-e_1,   && \\
 [e_2,y]& =e_2, &[e_i,x]&=(i-1)e_i, && 3\leq i\leq n\,.
\end{aligned}\right.\]
\end{thm}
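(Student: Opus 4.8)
The plan is to follow the same scheme as in the proof of Theorem~\ref{thm5.7}, now with a two–dimensional complement $Q=\langle x,y\rangle$. First I would fix a basis $\{e_1,\dots,e_n,x,y\}$ in which $\{e_1,\dots,e_n\}$ is the standard basis of $F_n^2$, so that $\mathcal{R}_{x|_{F_n^2}}$ and $\mathcal{R}_{y|_{F_n^2}}$ are non-nilpotent derivations and hence have the matrix shape of Proposition~\ref{prop5.5}; denote their entries by $\{\alpha_i,\beta,\gamma\}$ and $\{\alpha_i',\beta',\gamma'\}$. The diagonal of such a derivation is $(\alpha_1,\beta,2\alpha_1,3\alpha_1,\dots,(n-1)\alpha_1)$, so nil-independence of $x$ and $y$ forces the matrix $\left(\begin{smallmatrix}\alpha_1&\beta\\ \alpha_1'&\beta'\end{smallmatrix}\right)$ to be non-singular. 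Replacing $x,y$ by suitable linear combinations, exactly as in \eqref{E:chbasis}, I may assume $\alpha_1=1,\ \beta=0$ and $\alpha_1'=0,\ \beta'=1$; thus $\mathcal{R}_x$ acts on $e_1,e_3,\dots,e_n$ with weights $1,2,\dots,n-1$ and annihilates $e_2$ modulo lower terms, while $\mathcal{R}_y$ is, up to absorbable terms, the projection onto $\langle e_2\rangle$.

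Next I would introduce unknowns for the remaining products $[x,e_i],\ [y,e_i],\ [x,x],\ [x,y],\ [y,x],\ [y,y]$ (all lying in $F_n^2$ since $R^2\subseteq N$) and run through the Leibniz identity on the triples built from $\{e_1,e_2,x,y\}$. As in Case~1 of Theorem~\ref{thm5.7}, the identities on $\{e_1,x,e_1\}$, $\{e_1,x,x\}$, $\{x,e_1,x\}$ together with the chain relations $[e_{i+1},x]=\big[[e_i,e_1],x\big]$ give $[x,e_1]=-e_1$ and $[x,e_i]=0$ for $i\ge 2$ (after absorbable adjustments), and force $[x,x]$ into $\langle e_2,e_n\rangle$; the $y$-analogues and the mixed identities on $\{x,y,e_i\}$, $\{y,x,e_i\}$, $\{x,y,x\}$, $\{y,y,e_i\}$ pin down $[y,e_i]=0$ for $i\neq 2$, restrict $[y,e_2]$, and push $[y,y],\ [x,y],\ [y,x]$ into $\langle e_2,e_n\rangle$ as well. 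It is precisely these mixed identities — absent in the $(n+1)$-dimensional situation — that rule out any parametric family: once $y$ already realizes the weight attached to $e_2$, there is no room left for the $\gamma$-type or $\lambda$-type parameters that survived in Theorem~\ref{thm5.7}.

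Then, mirroring the recursive substitutions used after \eqref{E:ind} and in \eqref{E:gchbasis}, I would take $e_1'=e_1+\sum_i A_i e_i$, $e_i'=e_i+\sum_j A_{j-i+2}e_j$, $x'=x+\sum_i A_{i+1}e_i+Be_n$ (with an analogous $y$-adjustment), where the $A_i$ are the unique solutions of the triangular recursions coming from $[e_i',x']=-(i-1)e_i'$. This removes all off-diagonal derivation parameters $\alpha_3,\dots,\alpha_n$ and $\alpha_3',\dots,\alpha_n'$, the $\gamma,\gamma'$ terms, and the nilpotent remainders of $[x,x],\ [y,y],\ [x,y],\ [y,x]$. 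After this normalization the only surviving freedom is the value of $[y,e_2]$, which the remaining identities restrict to $[y,e_2]=-e_2$ or $[y,e_2]=0$; these two options are exactly the algebras $L_1$ and $L_2$, and they are non-isomorphic since $\dim\Ann_r(L_1)=n-2$ while $\dim\Ann_r(L_2)=n-1$.

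The step I expect to be the main obstacle is the middle one: verifying that every Leibniz identity containing the second generator $y$ is compatible only with these two algebras, i.e. that no parametric family can occur in dimension $n+2$. Carrying this out without an unwieldy case split is the delicate point; the rest is the same triangular-recursion bookkeeping already performed in Theorems~\ref{thm33} and \ref{thm5.7}.
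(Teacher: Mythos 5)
Your proposal follows essentially the same route as the paper: use Proposition~\ref{prop5.5} for the two outer derivations, normalize $x,y$ by a change of the type \eqref{E:chbasis} so that $\alpha_1=\mu=1$, $\lambda_1=\beta=0$, and then repeat the Leibniz-identity and triangular change-of-basis arguments of Theorem~\ref{thm5.7} to land on $L_1$ and $L_2$ (the paper's own proof is exactly this reduction, stated even more briefly). Your added observations — that nil-independence makes the $2\times 2$ matrix of diagonal parameters nonsingular, and that $\dim\Ann_r$ separates $L_1$ from $L_2$ — are correct and consistent with the paper.
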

\begin{proof} Let  \[
\mathcal{R}_{x _{\mid F_n^2}}=\begin{pmatrix}
\alpha_1& \alpha_2&\alpha_3&\alpha_4&\dots&\alpha_{n-1}&\alpha_n\\
0& \beta&0&0&\dots&0& \gamma\\
0& 0&2\alpha_1&\alpha_3&\dots&\alpha_{n-2}& \alpha_{n-1}\\
0& 0&0&3\alpha_1&\dots&\alpha_{n-3}&\alpha_{n-2}\\
\vdots&\vdots&\vdots&\vdots&\dots&\vdots&\vdots\\
0&0&0&0&\dots&0&(n-1)\alpha_1
\end{pmatrix}
\]
 and  \[
\mathcal{R}_{y _{\mid F_n^2}}=\begin{pmatrix}
\lambda_1& \lambda_2&\lambda_3&\lambda_4&\dots&\lambda_{n-1}&\lambda_n\\
0& \mu &0&0&\dots&0& \nu\\
0& 0&2\lambda_1&\lambda_3&\dots&\lambda_{n-2}& \lambda_{n-1}\\
0& 0&0&3\lambda_1&\dots&\lambda_{n-3}&\lambda_{n-2}\\
\vdots&\vdots&\vdots&\vdots&\dots&\vdots &\vdots\\
0&0&0&0&\dots&0&(n-1)\lambda_1
\end{pmatrix}
\]
be two nil independent outer derivation of the algebra $F_n^2$.

Taking the change of the basic elements $x, y$ similar to \eqref{E:chbasis},
we can assume that $\alpha_1=\mu=1, \ \lambda_1=\beta=0$.

Thus, we have the products:
\begin{align*}
 [e_1,x]&=e_1+\sum\limits_{i=2}^n\alpha_i\, e_i,
& [e_2,x]& =\gamma \, e_n, & [e_i,x] & =(i-1)\,e_i
+\sum\limits_{j=i+1}^n\alpha_{j-i+2}\, e_j,  && \ 3\leq i\leq n,\\
[e_1,y]& =\sum\limits_{i=2}^n\lambda_i\, e_i, & [e_2,y] & =e_2+\nu \,e_n,
& [e_i,y] & =\sum\limits_{j=i+1}^n\lambda_{j-i+2}\, e_j, && \ 3\leq i\leq n.
\end{align*}

Applying similar reasonings and changes of bases which we have
used in Theorem \ref{thm5.7}, we obtain isomorphism classes of algebras whose representative elements are the  $L_1$ and $L_2$.
\end{proof}

\begin{rem} In fact, the algebra $L_1$ is a direct sum of the ideals $(NF_{n-1}+ \langle x \rangle)$ and $\langle e_2, y \rangle$,
 where the  sum $NF_{n-1}+ \langle x \rangle$ is a solvable Leibniz algebra with nilradical $NF_{n-1}$ and $\langle e_2, y \rangle$ is
 a two-dimensional solvable Lie algebra. The algebra $L_2$ is a direct sum of the ideals $(NF_{n-1}+ \langle x \rangle)$ and
 $\langle e_2, y \rangle$, where $\langle e_2, y \rangle$ is a two-dimensional solvable non-Lie Leibniz algebra. Thus, from Theorem
 \ref{thm5.8},
 we conclude that any $(n+2)$-dimensional solvable Leibniz algebra with nilradical $F_n^2$ is split.
\end{rem}

\section*{Acknowledgements} The two first authors were supported by
MICINN, grant MTM 2009-14464-C02 (Spain) (European FEDER support
included), and by Xunta de Galicia grant Incite09 207 215PR.

%\bibliographystyle{plain}
%\bibliographystyle{elsarticle-num}
%\bibliographystyle{plain}
%\bibliographystyle{acmsmall}
%\bibliographystyle{spbasic}
%\bibliographystyle{alpha}
%\bibliographystyle{abstract}
%\bibliographystyle{apa}

%\bibliographystyle{siam}
%\bibliography{biblio}

\end{document}